\def\text{\mbox}
\def\div{\mathrm{div}\,}
\def\dps{\displaystyle}
\def\R{{\mathbb R}}
\def\N{{\mathbb N}}
\def\P{{\mathbb P}}
\def\D{{\mathscr D}}
\def\CC{{\mathscr C}}
\def\pt{\partial}
\def\omm{\Omega}
\def\<{\langle}
\def\>{\rangle}
\def\inclus{{\hookrightarrow}}
\def\grad{\nabla}
\def\bve{\;|\;}
\def\fleche{\rightarrow}
\newenvironment{remark}{ {\sc Remark -- } }   {\\}  
\newtheorem{theorem}{Theorem}[section]
\newtheorem{proposition}[theorem]{Proposition}
\newtheorem{lemma}[theorem]{Lemma}
\newcommand{\bfgreek}[1]{\bm{\@nameuse {up#1}}}
\def\hv{{\hat{v}}}
\def\Rt{\R^3}
\newcommand{\vecc}[1]{ {\boldsymbol #1}}
\def\v{\vecc{v}}
\def\zero{\vecc{0}}
\def\a{\vecc{a}}
\def\e{\vecc{e}}
\def\h{\vecc{h}}
\def\x{\vecc{x}}
\def\y{\vecc{y}}
\def\xs{\vecc{x}_\star}
\def\omm{\ommega}
\def\oms{\ommega_\star}
\def\Wh{W_h}
\def\curl{{\boldsymbol{\rm curl}\,}}
\def\En{{\mathscr E}}
\newenvironment{ourproof}[1]{ \begin{quote}{\sc Proof  #1} -- }   {  $\blacksquare$  \end{quote}   }
\def\Tauh{{{\mathcal T}_h}}
\def\Taush{{\mathcal T}_h^{\star}}
\def\Taussh{{\mathcal T}_h^{\star\star}}
\def\omz{\Omega_0}
\def\bomz{\overline{\Omega}_0}
\def\oms{\Omega_\star}
\def\ominft{\Omega_\infty}
\def\bominft{\overline{\Omega}_\infty}
\def\Ks{K_\star}
\def\cst{c^\star}
\newcommand{\nrm}[1]{|{#1}|}
\def\eH{\vecc{H}_e}
\def\vH{\vecc{h}}
\def\Mg{\vecc{M}}
\def\Mz{\vecc{M}_0}
\def\n{\vecc{n}}
\def\EE{{\mathscr E}}
\def\EEST{{\mathscr E}_{sf}}
\def\EESTH{{\mathscr E}_{sf}}
\def\indom{\chi_\omm}
\def\bomm{\overline{\omm}}
\def\St{{\mathbb S}^2}
\def\mz{\Mg_0}
\def\omm{\Omega}
\def\bom{\overline{\omm}}
\def\RZ{R_0}
\def\RAY{{r_0}}
\renewcommand{\leq}{\leqslant}
\renewcommand{\geq}{\geqslant}
\author[T. Z. Boulmezaoud]{Tahar Z. BOULMEZAOUD$^{1, 2, 3}$}
\address{\rm  $^1$ Universit\'e Paris-Saclay, UVSQ, LMV, Versailles, France.}
\address{\rm  $^2$  Department of Mathematics and Statistics, University of Victoria, Victoria, British Columbia, Canada.}
\email{tahar.boulmezaoud@uvsq.fr}
\author[K. Kaliche]{ Keltoum KALICHE$^{1, 3}$}
\address{\rm  $^3$ University Kasdi Merbeh, Ouargla, Algeria. }
\email{keltoumkaliche@yahoo.fr}
\DeclareRobustCommand{\subtitle}[1]{\\#1}
\title[Stray field  computation by inverted finite elements]{\Large Stray field computation by inverted finite elements: a new method in micromagnetic simulations}
\keywords{Inverted finite elements, micromagnetics, stray field, magnetostatics, unbounded domains}
\subjclass{35Q60, 35A35, 65M99}
\begin{document}

\begin{abstract}
In this paper, we propose a new method for computing the stray-field and 
the corresponding energy for a given magnetization configuration. Our approach is based on the use of inverted finite elements and does not need any truncation. After analyzing the problem in an appropriate functional framework, we describe the method and we prove its convergence. We then   display some computational results which  demonstrate its efficiency  and confirm its full potential.
 \end{abstract}

\begingroup
\def\uppercasenonmath#1{} 
\let\MakeUppercase\relax 
\maketitle
\endgroup

\renewcommand{\subtitle}[1]{}
\section{Introduction}
In micromagnetics, the structure of a magnetic body $\omm\subset \R^3$ if often described by the magnetization $\Mg$, which is a vector field defined over $\omm$ and minimizing the Landau-Lifschitz functional. In terms of dimensionless variables, the latter energy can be written into the form
$$
\EE(\Mg) = \alpha \int_{\omm} |\grad \Mg|^2 + \int_{\omm} \varphi(\Mg) dx + \frac{1}{2} \int_{\R^3} |\grad u|^2 dx - \int_{\R^3} \eH.\Mg dx, 
$$
where $\varphi \geq 0$ is a function describing the orientation of the magnetization, $\alpha > 0$ is a real parameter, 
$\eH$ the external magnetic field and $u$ is magnetostatic  potential. The latter quantity is related to the stray-field (or the magnetic induction)  $\vH$ by 
\begin{equation}\label{potent_vh}
\vH = - \grad u. 
\end{equation}
The existence of the scalar potential $u$ comes from Maxwell's equation
\begin{equation}\label{hrot_equa}
\curl \vH = \zero \mbox{ in } \R^3.
\end{equation}
Moreover, the stray-field $\vH$ and the magnetization $\Mg$ are related by the equation
\begin{equation}\label{relationMgHm}
\div(\vH + \Mg \indom) = 0 \mbox{ in } \R^3,
\end{equation}
where $\indom$ stands for characteristic (or indicator) function of $\omm$.  Rewritting \eqref{relationMgHm} in terms of $u$ and $\Mg$ gives the well known equation
\begin{equation}\label{main_equa}
\Delta u = \div (\Mg \indom)   \mbox{ in } \R^3. 
\end{equation}
This equation can also be written into the form
\begin{equation}\label{decompoLaplSyst}
\left\{
\begin{array}{rcll}
\Delta u &= & \div \Mg &\mbox{ in } \omm,  \\
\Delta u &= & 0 & \mbox{ in }\R^3 \backslash \overline{\omm},  \\
\dps{[u]} &=& 0 &  \mbox{ on } \pt \omm, \\
\dps{ \left[ \frac{\pt u}{\pt n} \right] }&=& - \Mg.\n & \mbox{ on } \pt \omm, 
\end{array}
\right.
\end{equation}
where $\n$ is the exterior normal on $\pt \omm$. 
In addition, the magnetization $\Mg$ is subject to the Heisenberg-Weiss condition
\begin{equation}\label{heisenweiss}
| \Mg |   \mbox{ is constant in } \omm.
\end{equation}
It is well known that calculating the stray-field $\vH$ and the corresponding energy
\begin{equation}
\EEST(u) = \frac{1}{2} \int_{\R^3} |\vH|^2 dx, 
\end{equation}
from the magnetization $\Mg$ is one of the most  important steps in studying micromagnetic configurations of a body $\omm$. We may observe that a  consequence of identity \eqref{main_equa} is that the stray-field energy also writes 
\begin{equation}
\EEST(u) = \frac{1}{2} \int_{\R^3} |\grad u|^2 dx =  - \frac{1}{2}  \int_{\omm} \Mg.\h dx. 
\end{equation}
 (see also the weak formulation of \eqref{main_equa} hereafter). \\
 In the existing litterrature,
one can find mainly two categories  of methods. In the first category the calculation of $u$ and  $\vH$ is often based on solving  the elliptic partial differential equation
\eqref{main_equa}. In that case, the computational domain is often  truncated and approximation is  done in a sufficiently large bounded region (see, e. g., \cite{Berkov1}, \cite{LuskinMA}, \cite{KoehlerFred1, KoehlerFred2},   \cite{carstensen2001} and  \cite{Prohlbook}, \cite{hanbao_poisson1999} and \cite{hanbao_siam2000}). In the second category of methods, the  approach consists to evaluate $u$ using the  integral  formula (see,  e. g., \cite{ExlAuz})
\begin{equation}\label{green_formula}
u(\x) = \frac{1}{4\pi} \int_{\omm} \frac{(\x-\y).\Mg(\y)}{|\y-\x|^3} d\y. 
\end{equation}
Among methods using formula \eqref{green_formula}, one can mention methods based on the Fast Fourier transfor and fast Multipole methods (see, e. g.,  \cite{BlueShen}, \cite{longOng}), $H$-matrix techniques (\cite{Popovi}) or direct integration methods (see, e. g., \cite{ExlAuz}, \cite{labbe}).  \\
In this paper, the focus is on computing the stray-field $\vH$ from the magnetization $\Mg$  by a novel approach based on the use of inverted finite element method (IFEM). IFEM was first introduced by Boulmezaoud in \cite{boulmezaoudm2an}  for solving elliptic problem in unbounded domains without any truncation. In the context of equation \eqref{main_equa}  considered here, the domain of computation $\R^3$ is considered in its entirety.  The deployment of IFEM  is based on a weak formulation of \eqref{main_equa}  in an appropriate weighted space. \\
$\;$\\
The paper is organized as follows. In section \ref{etude_pb}, we employ some weighted function spaces to study equation  \eqref{main_equa}, completed with asymptotic conditions when $|\x| \fleche +\infty$.  In particular, we give some details about the behavior at large distances  and about the   smoothness of the solution and of its derivatives.  Section \ref{ifem_presentation} is devoted to an outline of IFEM.   After giving the general lines of the method, we prove its convergence in the context of 
equation \eqref{main_equa}. In the last  section, we give some numerical results obtained with a 3D code. 

\section{Preliminaries. Well posedness of the problem}\label{etude_pb}
In the sequel, $\omm$ denotes an open and  connected  subset of $\R^3$ having a lipschitzian boundary, not necessarily bounded (althought in physical applications $\omm$ is often a bounded domain). Let $\Mg$ be a vector field defined over $\omm$.  From a strictl mathematical point, unless otherwise indicated, we only assume that 
\begin{equation}\label{assump_h}
\int_{\omm} |\Mg|^2 dx < \infty.
\end{equation}
Assumption \eqref{assump_h} is obviously valid when $\omm$ is bounded  (or has a finite  volume) and when $|\Mg|$ is satisfying the Heisenberg-Weiss constraint \eqref{heisenweiss}. In that case
$$
\|\Mg\|^2_{L^2(\omm)^3} = \int_{\omm} |\Mg|^2 dx = |\Mg|^2  |\omm| < +\infty. 
$$
We now come back to  equation \eqref{main_equa}. Without going into the technicalities of Poisson equation, it can be seen that existence and uniqueness of solutions to \eqref{main_equa} 
depend on the required behavior at large distances, that is when $|\x| \fleche + \infty$. To illustrate this, one may observe that polynomial growth of solutions at large distances  should be excluded,  otherwise uniqueness may be lost  since  harmonic polynomials  can be added to any solution of  \eqref{main_equa} (see, e. g., \cite{giroire}, \cite{amrouche1}). Fortunately, in the current context, $u$ must fulfill the physical constraint
\begin{equation}\label{asympto_grad}
\ \int_{\R^3} |\grad u|^2 dx < \infty, 
\end{equation}
which means that $\vH = -\grad u$ has a finite energy.  In view of Hardy's inequality (see, e. g., \cite{amrouche1}), it is natural to require that
\begin{equation}\label{asympto_u}
\int_{\R^3} \frac{ |u|^2}{|\x |^2+1} dx < \infty.  
\end{equation}
For this reason, we need to introduce some  weighted function spaces. For all integers $\ell \geq 0$ and $m \geq 0$, 
  $W^m_\ell(\R^3)$ stands for the space of all the functions satisfying 
 $$
\forall |\lambda| \leq m, \; (1+|\x|^2)^{(\ell+|\lambda|-m)/2} D^\lambda v \in L^2(\Rt).
$$
This space is endowed with the norm 
\begin{equation}
\|v\|_{W^m_\ell(\Rt)} = \left( \sum_{|\lambda| \leq m} \int_{\Rt}  (|\x |^2+1)^{|\lambda|+\ell-m}   |D^\lambda v|^2dx\right)^{1/2}. 
\end{equation}
In other words,  elements of $W^1_0(\Rt)$  are those  functions satisfying  \eqref{asympto_grad} and \eqref{asympto_u}.  We may observe that
non vanishing polynomial functions do not belong to $W^1_0(\Rt)$. 
Before continuing with problem  \eqref{main_equa}, let us recall that for any function $v \in W^1_\alpha(\R^3)$,  $\alpha \in \R \backslash \{-1/2\}$,  one has (see, e. g., \cite{alliot_these})
\begin{equation}\label{prop_asympt_in_wei}
\lim_{|\x| \fleche + \infty} |\x|^{\alpha + 1/2} \|u(|\x|, .)\|_{L^2(\St)}   = 0, 
\end{equation}
where $\St$ is the unit sphere of $\R^3$ and 
\begin{equation}\label{defin_normSt}
\|u(|\x|, .)\|^2_{L^2(\St)}  = \int_{\St} |u(|\x|, \sigma )|^2 d\sigma. 
\end{equation}
Equation \eqref{main_equa}, completed with asymptotic conditions  \eqref{asympto_grad} and \eqref{asympto_u}, can be written into the variational form: find $u \in W^1_0(\R^3)$ such that
\begin{equation}\label{forma_weak}
\forall v \in W^1_0(\R^3), \; \int_{\R^3} \grad u. \grad v dx = \int_{\omm} \Mg.\grad v dx. 
\end{equation}
We have the following result
\begin{proposition}[Well posedness]\label{well_posed_prop}
Suppose that assumption \eqref{assump_h} holds true. Then, \eqref{forma_weak}, and consequently \eqref{main_equa},  has one and only one solution in $W^1_0(\Rt)$. Moreover, the following estimates hold
\begin{eqnarray}
\|(|\x|^2+1)^{-1/2} u\|_{L^2(\Rt)} &\leq & 2  \|\Mg\|_{L^2(\omm)},  \label{estima_u0}\\
 \; \|\grad u\|_{L^2(\Rt)^3}& \leq&   \|\Mg\|_{L^2(\omm)}. \label{estima_grad}
\end{eqnarray}
\end{proposition}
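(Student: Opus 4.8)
The plan is to apply the Lax--Milgram theorem to the bilinear form $a(u,v) = \int_{\R^3} \grad u\cdot\grad v\,dx$ on the Hilbert space $W^1_0(\R^3)$, using the right-hand side $L(v) = \int_{\omm}\Mg\cdot\grad v\,dx$ as the linear functional. Continuity of $a$ is immediate from Cauchy--Schwarz, and continuity of $L$ follows from \eqref{assump_h} together with the fact that $\|\grad v\|_{L^2(\R^3)^3}$ is part of (indeed, as we will see, equivalent to) the $W^1_0$-norm: $|L(v)| \leq \|\Mg\|_{L^2(\omm)}\,\|\grad v\|_{L^2(\R^3)^3}$. The heart of the matter is coercivity, i.e.\ a Poincar\'e-type inequality showing that $\|\grad v\|_{L^2(\R^3)^3}$ controls the full norm $\|v\|_{W^1_0(\R^3)}$; once this is in hand, $a(v,v) = \|\grad v\|_{L^2}^2 \gtrsim \|v\|_{W^1_0}^2$ and Lax--Milgram yields existence and uniqueness.

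The key inequality I would invoke is Hardy's inequality in $\R^3$: for all $v \in \mathscr D(\R^3)$ (or the appropriate density class),
\begin{equation*}
\int_{\R^3} \frac{|v|^2}{|\x|^2}\,dx \leq 4 \int_{\R^3} |\grad v|^2\,dx,
\end{equation*}
the constant $4 = (2/(d-2))^2$ with $d=3$ being sharp. Since $(|\x|^2+1)^{-1} \leq |\x|^{-2}$, this gives $\|(|\x|^2+1)^{-1/2} v\|_{L^2(\R^3)} \leq 2\|\grad v\|_{L^2(\R^3)^3}$, which is exactly \eqref{estima_u0} applied to the solution. This simultaneously establishes that the seminorm $v \mapsto \|\grad v\|_{L^2(\R^3)^3}$ is a norm on $W^1_0(\R^3)$ equivalent to the full norm (one direction is trivial, the other is Hardy), so the space is genuinely a Hilbert space under the inner product $a(\cdot,\cdot)$. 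I would cite \cite{amrouche1} or \cite{alliot_these} for the precise statement and for the density of smooth compactly supported functions in $W^1_0(\R^3)$, which is what lets one pass Hardy's inequality from test functions to all of $W^1_0$.

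With coercivity secured, Lax--Milgram gives a unique $u \in W^1_0(\R^3)$ solving \eqref{forma_weak}. For the estimates: testing \eqref{forma_weak} with $v = u$ gives $\|\grad u\|_{L^2(\R^3)^3}^2 = \int_{\omm}\Mg\cdot\grad u\,dx \leq \|\Mg\|_{L^2(\omm)}\|\grad u\|_{L^2(\R^3)^3}$, hence \eqref{estima_grad} after dividing; then \eqref{estima_u0} follows by combining \eqref{estima_grad} with the Hardy bound $\|(|\x|^2+1)^{-1/2}u\|_{L^2} \leq 2\|\grad u\|_{L^2}$. Finally, to connect \eqref{forma_weak} back to \eqref{main_equa}, one notes that for $v \in \mathscr D(\R^3)$ the variational identity reads $\langle -\Delta u, v\rangle = \langle \div(\Mg\indom), v\rangle$ in the distributional sense, so $u$ solves \eqref{main_equa}; conversely any distributional solution lying in $W^1_0$ satisfies \eqref{forma_weak} by density.

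The main obstacle is really just the coercivity/Hardy step, and it is less an obstacle than a point requiring care: one must be sure the density of $\mathscr D(\R^3)$ in $W^1_0(\R^3)$ holds in dimension $3$ (it does, precisely because $d > 2$ so that no logarithmic weight and no quotient by constants is needed), so that Hardy's inequality extends to the whole space and the seminorm is a bona fide equivalent norm. Everything else is a routine application of Lax--Milgram and Cauchy--Schwarz. I would also remark in passing that the excerpt's own comment — polynomials do not belong to $W^1_0(\R^3)$ — is the informal shadow of this same fact and is what rules out the non-uniqueness coming from adding harmonic polynomials.
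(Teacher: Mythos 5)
Your proof is correct and follows essentially the same route as the paper: Hardy's inequality (extended from $\mathscr D(\R^3)$ to $W^1_0(\R^3)$ by density) gives coercivity of the Dirichlet form, Lax--Milgram yields existence and uniqueness, and testing with $v=u$ followed by Hardy produces the two estimates. The extra remarks you add (sharpness of the constant $4$, the distributional link back to \eqref{main_equa}) are fine but not needed beyond what the paper does.
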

 We should note immediately that the decay of $u$ at large distances is faster than in estimates  \eqref{estima_u0} and \eqref{estima_grad}. Actually, $u \in L^2(\R^3)$ and $(|\x|^2+1)^{1/2} \grad u\in L^2(\R^3)^3$ as it will be stated in Proposition \ref{asympto_behave} hereafter. \\
\begin{ourproof}{of Proposition \ref{well_posed_prop}}
Let us first recall the classical Hardy inequality
\begin{equation}
\forall v \in \D(\Rt), \; \int_{\Rt}\frac{|u|^2}{|\x|^2} dx   \leq  4  \int_{\Rt}{|\grad u|^2} dx.
\end{equation}
Thus,
\begin{equation}\label{hardy_inequa2} 
\forall v \in \D(\Rt), \; \int_{\Rt}\frac{|u|^2}{|\x|^2+1} dx   \leq 4  \int_{\Rt}{|\grad u|^2} dx.
\end{equation}
By density of $ \D(\Rt)$ in $W^1_0(\Rt)$  (see \cite{hanouzet}), the last inequality remains valid 
for $v \in W^1_0(\Rt)$.  It follows that the bilinear form on the left hand side of
\eqref{forma_weak} is coercive. The linear form on the right hand side of \eqref{forma_weak} satisfies
$$
\left| \int_{\omm} \Mg.\grad v dx \right| \leq  \| \Mg \|_{L^2(\omm)^3}.\| \grad v \|_{L^2(\R^3)^3}.
$$
Existence and uniqueness follow from Lax-Milgram theorem. Moreover, taking $v=u$ in 
\eqref{forma_weak}  gives estimate \eqref{estima_grad}. Combining with inequality 
\eqref{hardy_inequa2}  gives \eqref{estima_u0}. 
\end{ourproof}
Since the right hand side of \eqref{forma_weak} is in a divergence form,  we also get the following result
\begin{proposition}[Asymptotic behavior]\label{asympto_behave}
Let $u \in W^1_0(\R^3)$ be solution of \eqref{main_equa}. Then, 
\begin{enumerate}
\item $u \in L^2(\R^3)$, 
\item $(1+|\x|^2)^{1/2} \grad u \in L^2(\R^3)^3$. 
\item  $\lim_{|\x| \fleche + \infty} |\x|^{3/2}  \|u(|\x|,  .)\|_{L^2(\St)} = 0$ where $\|u(|\x|, .)\|_{L^2(\St)} $ is defined by \eqref{defin_normSt}. 
\end{enumerate}
\end{proposition}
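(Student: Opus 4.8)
The plan is to reduce the whole proposition to the single statement $u \in W^1_1(\Rt)$ — items (1)--(2) being literally its definition, and (3) a consequence of it via the asymptotic property \eqref{prop_asympt_in_wei} read with $\alpha = 1$ (admissible since $1 \neq -1/2$) — and then to establish that membership by exploiting the divergence form of the right-hand side of \eqref{main_equa}. Thus Proposition \ref{well_posed_prop} already provides $u \in W^1_0(\Rt)$, and the only extra input needed is to improve the weight by one power.

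Working in the scale of weighted spaces $W^m_\ell(\Rt)$ extended to negative indices as in \cite{amrouche1,alliot_these} (so that, in particular, $W^{-1}_1(\Rt) = (W^1_{-1}(\Rt))'$), the first — and key — step is to notice that the datum $\div(\Mg\indom)$ sits one weight-power better than the generic dual $W^{-1}_0(\Rt)$ in which a plain $L^2$ right-hand side would live. For every $v \in W^1_{-1}(\Rt)$ one has $\<\div(\Mg\indom), v\> = -\int_{\Rt}\Mg\indom\cdot\grad v\,dx$, hence
\[
\left|\<\div(\Mg\indom), v\>\right| \leq \|(|\x|^2+1)^{1/2}\Mg\indom\|_{L^2(\Rt)}\,\|(|\x|^2+1)^{-1/2}\grad v\|_{L^2(\Rt)} \leq C\,\|v\|_{W^1_{-1}(\Rt)},
\]
so that $\div(\Mg\indom) \in W^{-1}_1(\Rt)$; moreover its pairing with the constant function vanishes, i.e. the datum is orthogonal to the constants, which form precisely the kernel of $\Delta$ on $W^1_{-1}(\Rt)$. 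When $\omm$ is bounded — the case of interest in the applications — this is immediate, since then $\Mg\indom$ is compactly supported and $(|\x|^2+1)^{1/2}\Mg\indom \in L^2(\Rt)^3$ trivially.

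The second step is a direct appeal to the isomorphism theory for the Laplacian in weighted Sobolev spaces (\cite{giroire,hanouzet,amrouche1,alliot_these}): $\Delta$ maps $W^1_1(\Rt)$ isomorphically onto $\{g \in W^{-1}_1(\Rt) : \<g, 1\> = 0\}$, the kernel on the domain side being trivial because $W^1_1(\Rt) \subset L^2(\Rt)$ contains no non-zero harmonic polynomial. Hence there is $\tilde u \in W^1_1(\Rt) \subset W^1_0(\Rt)$ with $\Delta\tilde u = \div(\Mg\indom)$, and the uniqueness in Proposition \ref{well_posed_prop} forces $u = \tilde u \in W^1_1(\Rt)$; this yields (1) and (2), and (3) follows from \eqref{prop_asympt_in_wei}. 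A more hands-on alternative for bounded $\omm$ is to read the decay off the representation formula \eqref{green_formula} directly: Taylor-expanding $(\x-\y)/|\x-\y|^3 = \x/|\x|^3 + O(|\x|^{-3})$ for $|\x|$ large and $\y \in \omm$ gives $u(\x) = \tfrac{1}{4\pi}\,\tfrac{\x}{|\x|^3}\cdot\int_\omm\Mg\,d\y + O(|\x|^{-3})$, so that $u(\x) = O(|\x|^{-2})$ and $\grad u(\x) = O(|\x|^{-3})$ as $|\x| \to +\infty$, and (1)--(3) drop out by inspection — the absence of a surviving $O(|\x|^{-1})$ monopole term being once more the identity $\<\div(\Mg\indom), 1\> = 0$.

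The step to watch is the first one: the membership $\div(\Mg\indom) \in W^{-1}_1(\Rt)$ together with the orthogonality to constants is where the divergence form of the datum is genuinely exploited — a non-divergence $L^2$ right-hand side would only reach $W^{-1}_0(\Rt)$ and give no gain — and, for a genuinely unbounded $\omm$, it is also where some mild decay of $\Mg$ at infinity must be assumed (automatic as soon as $\omm$ has finite volume with $|\Mg|$ bounded, a fortiori when $\omm$ is bounded). Everything beyond that is routine bookkeeping with the weighted-space definitions and the well-posedness already in hand.
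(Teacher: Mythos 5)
Your proof is correct and follows essentially the same route as the paper: both reduce everything to showing $u \in W^1_1(\Rt)$ via the weighted isomorphism of the Laplacian $\Delta : W^1_1(\Rt) \to W^{-1}_1(\Rt)\perp \R$ applied to the divergence-form datum $\div(\Mg\indom)$, whose orthogonality to constants is automatic, and then deduce (3) from \eqref{prop_asympt_in_wei}. Your explicit duality estimate showing $\div(\Mg\indom)\in W^{-1}_1(\Rt)$ and your remark that this step needs $\Mg\indom\in W^0_1(\Rt)^3$ (not just \eqref{assump_h}) for unbounded $\omm$ are in fact slightly more careful than the paper's ``obviously''.
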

\begin{ourproof}{ of Proposition \eqref{asympto_behave}} 
Let us  prove that $u \in W^1_1(\R^3)$.  This is a direct consequence
of the following lemma  which is a particular case of 
a more general result proven in \cite{amrouche1} (Theorem 2.16):
\begin{lemma}\label{delta_op_lem}
Let $m \geq 0$ be an integer. Then, the Laplace operator  
$$ \Delta : W^{m+1}_{m+1}(\R^{3}) \mapsto W^{m-1}_{m+1}(\R^{3})\perp \R$$  is an isomorphism.
\end{lemma}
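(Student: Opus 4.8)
The plan is to read the statement off the general weighted-Sobolev isomorphism theorem for the Laplacian of \cite{amrouche1} (Theorem~2.16): within the full scale of spaces it asserts that
$$
\Delta\ :\ W^{m+1}_{m+1}(\R^{3})\,\big/\,\mathscr{A}_m\ \longrightarrow\ W^{m-1}_{m+1}(\R^{3})\ \perp\ \mathscr{B}_m
$$
is an isomorphism, where $\mathscr{A}_m$ and $\mathscr{B}_m$ are finite-dimensional spaces of harmonic polynomials whose admissible degrees are dictated by the weight exponent and by the space dimension (and, for $m=0$, $W^{-1}_{1}(\R^{3})$ is read as the dual of $W^{1}_{-1}(\R^{3})$). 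What remains is to check that here $\mathscr{A}_m=\{0\}$ and $\mathscr{B}_m=\R$, so that the quotient and the higher-degree part of the compatibility condition both disappear. The identification $\mathscr{A}_m=\{0\}$ is the easy half: any $v$ in the kernel of $\Delta$ on $W^{m+1}_{m+1}(\R^{3})$ is a smooth harmonic function (Weyl's lemma), and the zeroth-order condition defining the space forces $v\in L^{2}(\R^{3})$; but a harmonic function in $L^{2}(\R^{3})$ vanishes, as the mean-value inequality over balls of radius $R\fleche+\infty$ shows. In particular no nonzero polynomial belongs to $W^{m+1}_{m+1}(\R^{3})$ and no quotient is needed on the left.

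For the target I would first settle the elementary inclusion $\Delta\bigl(W^{m+1}_{m+1}(\R^{3})\bigr)\subset W^{m-1}_{m+1}(\R^{3})\perp\R$. Continuity of $\Delta\colon W^{m+1}_{m+1}(\R^{3})\to W^{m-1}_{m+1}(\R^{3})$ is immediate from the definition of the norms (Leibniz rule and the bookkeeping of weight exponents); the constant $1$ defines a continuous linear functional on $W^{m-1}_{m+1}(\R^{3})$, the weight exponents being exactly such that $\langle f,1\rangle$ converges; and the functional $v\mapsto\langle\Delta v,1\rangle$ vanishes on $\D(\R^{3})$ by the divergence theorem, hence on all of $W^{m+1}_{m+1}(\R^{3})$ by density (cf.\ \cite{hanouzet}). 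That orthogonality to the constants is \emph{all} that is required, i.e.\ $\mathscr{B}_m=\R$, is the nontrivial assertion furnished by \cite{amrouche1}. Heuristically it is sharp: in $\R^{3}$, convolving $f$ with the fundamental solution $|x|^{-1}$ produces a term $\sim\bigl(\int_{\R^{3}}f\bigr)|x|^{-1}$ that is borderline non-square-integrable at infinity, whereas the condition $\int_{\R^{3}}f=0$ kills it and leaves a solution decaying like $|x|^{-2}$, which comfortably meets every weighted bound entering $W^{m+1}_{m+1}(\R^{3})$; no further moment of $f$ has to be annihilated.

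For a self-contained argument, the natural route is induction \emph{within the two-parameter scale} $\Delta\colon W^{m+1}_{\ell}(\R^{3})\to W^{m-1}_{\ell}(\R^{3})$ rather than only along the diagonal $\ell=m+1$, because differentiating $\Delta u=f$ preserves $\ell$ but lowers the order. The base case is the well-posedness of $\Delta\colon W^{1}_{1}(\R^{3})\to W^{-1}_{1}(\R^{3})\perp\R$: for $f$ in the target, the inclusion $W^{-1}_{1}(\R^{3})\subset W^{-1}_{0}(\R^{3})$ together with the Lax--Milgram argument of Proposition~\ref{well_posed_prop} (now for an arbitrary right-hand side in $W^{-1}_{0}(\R^{3})$) gives a unique $u\in W^{1}_{0}(\R^{3})$ with $\Delta u=f$, which one then promotes to $W^{1}_{1}(\R^{3})$; the order is raised by differentiating the equation and combining the lower-order case with interior elliptic regularity. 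The main obstacle, in either presentation, is the sharp weighted a priori estimate underpinning surjectivity — showing that the solution \emph{inherits} the decay encoded in the weight of the data, with $f\perp\R$ playing precisely the role of the compatibility condition that removes the slowly decaying $O(|x|^{-1})$ tail. Concretely I expect the crux to be a Caccioppoli-type estimate on the dyadic annuli $2^{k}\leq|x|\leq 2^{k+1}$: rescale to unit size, apply interior $H^{m+1}$-estimates and a Poincar\'e inequality, and sum the resulting geometric series in $k$ — its summability being exactly what forces, and is forced by, the compatibility condition. This is the technical core already contained in Theorem~2.16 of \cite{amrouche1}, which is why deducing the lemma from it is the economical route.
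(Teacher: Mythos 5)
Your proposal is correct and takes essentially the same route as the paper: the paper's entire ``proof'' of this lemma is the observation that it is a particular case of Theorem~2.16 of \cite{amrouche1}, which is exactly the reduction you carry out. Your additional verifications --- that the kernel is trivial because a harmonic function in $L^{2}(\R^{3})$ vanishes, and that the only compatibility condition is $\langle f,1\rangle=0$ --- are sound and simply make explicit the specialization the paper leaves implicit.
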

Here  $W^{-1}_{1}(\R^{3})$ stands for the dual space of $W^{1}_{-1}(\R^{3})$. 
 It may be noted at this point that 
constant functions belong to $W^{1}_{-1}(\R^{3})$.  By $W^{-1}_{1}(\R^{3})\perp \R$ we mean the space of functions
 $ f \in W^{-1}_{1}(\R^{3})$ satisfying 
 \begin{equation}\label{cond_ortho}
 \langle f, 1 \rangle_{W^{-1}_1(\R^3), W^{1}_{-1}(\R^3)} = 0. 
 \end{equation}
In the context of equation \eqref{main_equa}, the right hand side is $f = \div (\Mg\indom)$. Thus,
$f \in W^{-1}_1(\R^3)$ (since, obviously, $\Mg\indom \in W^0_{1}(\R^3)^3$) and \eqref{cond_ortho}
is automatically fulfilled.  We conclude that $u \in W^1_1(\R^3)$, thanks to Lemma \ref{delta_op_lem}. This ends the proof of the two first assertions.
The third assertion follows from property \eqref{prop_asympt_in_wei}. \\
\end{ourproof}
 \begin{remark}
 One can also prove that $u \in L^2(\R^3)$ by means of the Fourier transform. 
 \end{remark}

 \begin{proposition}[Regularity]\label{regul_propo}
 Assume that $\omm$ is a bounded open set of $\Rt$ with a $\CC^{1,1}$ boundary and  that 
  \begin{equation}
 \Mg \in L^2(\omm)^3, \; \div \Mg \in L^2(\omm) \mbox{ and } \Mg.\n \in H^{1/2}(\pt \omm).
 \end{equation}
Let $u \in W^1_0(\R^3)$ be solution of \eqref{main_equa}. Then, 
 \begin{enumerate}
 \item $u_{|\omm} \in H^2(\omm)$,
 \item $u_{|\R^3 \backslash \bomm} \in W^2_2( \Rt \backslash \bomm)$, that is
 $$
\forall 1 \leq i, j \leq 3, \; (|\x|^2+1) \frac{\pt^2 u}{\pt x_i \pt x_j} \in L^2(\Rt \backslash \bomm).
 $$
 \item[(c)] If $\Mg.\n = 0$ on $\pt \omm$, then $u \in W^2_2(\R^3)$.
 \end{enumerate}
 \end{proposition}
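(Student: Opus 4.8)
The plan is to exploit the splitting of $\R^3$ into $\omm$ and its complement, reducing the statement to interior elliptic regularity plus a weighted exterior regularity result, glued together by the transmission conditions in \eqref{decompoLaplSyst}. First I would write $f = \div(\Mg\,\indom)$ and note that under the hypotheses $\Mg \in L^2(\omm)^3$, $\div\Mg \in L^2(\omm)$, $\Mg\cdot\n \in H^{1/2}(\pt\omm)$ we may integrate by parts in \eqref{forma_weak} to recover the transmission system \eqref{decompoLaplSyst} in the strong sense: $\Delta u = \div\Mg \in L^2(\omm)$ in $\omm$, $\Delta u = 0$ in $\R^3\setminus\bomm$, with the jump of $u$ vanishing across $\pt\omm$ and the jump of the normal derivative equal to $-\Mg\cdot\n$.

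For assertion (a), since $\omm$ is bounded with $\CC^{1,1}$ boundary and $\Delta u \in L^2(\omm)$, I would apply standard interior-plus-boundary elliptic regularity for the Laplacian; the boundary data for the restriction $u_{|\omm}$ is the trace $u_{|\pt\omm} \in H^{3/2}(\pt\omm)$ (which is well defined because $u \in W^1_0(\R^3)$ has a trace in $H^{1/2}(\pt\omm)$, and the harmonicity on the exterior side with the $H^{1/2}$ Neumann datum upgrades this trace — alternatively one treats the two sides simultaneously). The cleanest route: near $\pt\omm$, flatten the boundary and use that $u$ solves a transmission problem with $L^2$ interior source and $H^{1/2}$ normal-jump datum, for which the $H^2$-on-each-side regularity is classical (see the standard transmission-problem theory). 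This gives $u_{|\omm} \in H^2(\omm)$ and simultaneously local $H^2$ regularity up to $\pt\omm$ from the exterior side.

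For assertion (b), the exterior domain $\R^3\setminus\bomm$ is unbounded, so I would combine the local $H^2$ regularity near $\pt\omm$ just obtained with a weighted estimate at infinity. The tool is Lemma \ref{delta_op_lem} with $m=1$: $\Delta : W^2_2(\R^3) \to W^0_2(\R^3) \perp \R$ is an isomorphism. Concretely, I would pick a smooth cutoff $\chi$ equal to $1$ outside a large ball containing $\bomm$ and $0$ near $\bomm$, so that $\chi u$ is defined on all of $\R^3$, is harmonic outside a compact set, and $\Delta(\chi u) = u\,\Delta\chi + 2\grad\chi\cdot\grad u$ is compactly supported; this source lies in $W^0_2(\R^3)$ and, being compactly supported, trivially satisfies the orthogonality condition \eqref{cond_ortho} only if its integral vanishes — so instead I would apply the $m=1$ isomorphism to solve $\Delta w = \Delta(\chi u)$ with $w \in W^2_2$ after subtracting the mean, compare $w$ with $\chi u$ via Lemma \ref{delta_op_lem} for $m=0$ (Proposition \ref{asympto_behave} already gives $u \in W^1_1$), and conclude $\chi u \in W^2_2(\R^3)$, hence $(|\x|^2+1)\pt^2_{ij}u \in L^2$ at infinity; patching with the local $H^2$ bound near $\pt\omm$ yields $u_{|\R^3\setminus\bomm} \in W^2_2(\R^3\setminus\bomm)$.

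For assertion (c), when $\Mg\cdot\n = 0$ on $\pt\omm$ the normal-derivative jump vanishes, so $\grad u$ has no jump across $\pt\omm$ and the source $f = \div(\Mg\indom)$, extended by $\div\Mg$ inside and $0$ outside, is a genuine $L^2(\R^3)$ function with no singular boundary part; it lies in $W^0_2(\R^3)$. I would still need $f \perp \R$, i.e. $\int_{\R^3} f = \int_\omm \div\Mg = \int_{\pt\omm}\Mg\cdot\n = 0$, which holds precisely because $\Mg\cdot\n = 0$. Then Lemma \ref{delta_op_lem} with $m=1$ gives directly a solution in $W^2_2(\R^3)$; uniqueness in $W^1_0(\R^3)$ (Proposition \ref{well_posed_prop}), together with $W^2_2(\R^3) \subset W^1_0(\R^3)$, forces $u$ itself to be that solution, so $u \in W^2_2(\R^3)$.

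The main obstacle I anticipate is assertion (b): handling the regularity simultaneously near the compact boundary $\pt\omm$ and at infinity requires care, because the local elliptic estimates and the global weighted isomorphism of Lemma \ref{delta_op_lem} live in different function-space scales, and the cutoff argument must be set up so that the commutator source $u\,\Delta\chi + 2\grad\chi\cdot\grad u$ genuinely lands in the space $W^0_2(\R^3)\perp\R$ (or so that one can absorb its mean into a harmonic-at-infinity correction). Verifying the orthogonality/compatibility bookkeeping and matching the local $H^2$ piece to the weighted $W^2_2$ piece across the transition region is the delicate step; everything else is either classical interior regularity (a) or a direct application of the isomorphism theorem (c).
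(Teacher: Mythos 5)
Your part (c) coincides with the paper's argument (you even make the compatibility check $\int_\omm \div \Mg\, dx = \int_{\pt\omm}\Mg\cdot\n\, d\sigma = 0$ explicit, which the paper leaves implicit). For (a) and (b), however, you take a genuinely different and heavier route. The paper proves (a) and (b) in one stroke by a Neumann lifting: pick $u_0\in H^2(\omm)$ with $u_0=0$ and $\pt u_0/\pt n=\Mg\cdot\n$ on $\pt\omm$, and set $U=u-u_0$ in $\omm$, $U=u$ in $\Rt\setminus\bomm$. Both jumps of $U$ across $\pt\omm$ then vanish, so $\Delta U=(\div\Mg-\Delta u_0)\chi_\omm$ holds globally as a distribution, with a compactly supported $L^2$ right-hand side whose integral is zero \emph{by the very construction of} $u_0$; a single application of Lemma \ref{delta_op_lem} gives $U\in W^2_2(\Rt)$, and restricting to $\omm$ and to $\Rt\setminus\bomm$ yields (a) and (b) simultaneously. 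This bypasses both the transmission-regularity theory you invoke as a black box for (a) and the cutoff-at-infinity argument you need for (b).

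The gap in your proposal is precisely the point you flag but do not close in (b). To put $\chi u$ in $W^2_2(\Rt)$ via Lemma \ref{delta_op_lem} you need the commutator source $\Delta(\chi u)=u\,\Delta\chi+2\grad\chi\cdot\grad u$ to satisfy the orthogonality condition \eqref{cond_ortho}, and your workaround --- solve $\Delta w=\Delta(\chi u)$ ``after subtracting the mean'' and then compare $w$ with $\chi u$ --- cannot deliver $\chi u\in W^2_2(\Rt)$ unless that mean actually vanishes: if it did not, $\chi u$ would simply fail to lie in $W^2_2(\Rt)$, and no uniqueness argument in a larger space could repair that. The mean does vanish (by the divergence theorem $\int_{B_R}\Delta(\chi u)\,dx=\int_{\pt B_R}\pt_n u\,d\sigma$ for $R$ large, and this tends to $0$ along a subsequence because $(1+|\x|^2)^{1/2}\grad u\in L^2(\Rt)^3$ by Proposition \ref{asympto_behave}), but this must be proved, not hedged. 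Once that verification is supplied, and granting the classical $H^2$ transmission regularity near $\pt\omm$ that you cite for (a), your route does close; the paper's lifting is nevertheless preferable because it makes the compatibility bookkeeping automatic and needs only one application of the weighted isomorphism.
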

 \begin{ourproof}{ of Proposition \eqref{regul_propo}.}
Let $u_0 \in H^2(\omm)$ such that 
$$
u_0 = 0  \mbox{ and } \frac{\pt u_0}{\pt n} =  \Mg.\n \mbox{ on } \pt \omm.
$$
Set  
$$
U = \left\{
\begin{array}{ll}
 u - u_0 & \mbox{ in } \omm, \\
 u &\mbox{ in } \R^3 \backslash \bomm.
 \end{array}
\right.
$$ 
Since $[U] = 0$ on $\pt \omm$, we easily deduce that $ U \in W^1_0(\Rt)$. Moreover, we have
$$
[\frac{\pt U}{\pt n}] = 0 \mbox{ on } \pt \omm, 
$$
Thus, 
$$
\Delta U = (\div \Mg - \Delta u_0) \chi_{\omm} \in W^0_2(\Rt). 
$$
The right hand side of this equation satisfies
$$
\< \div \Mg - \Delta u_0, 1 \> = \int_{\omm} \div (\Mg-\grad u_0) dx=  \int_{\omm} (\Mg.\n-\frac{\pt u_0}{\pt n}) d\sigma = 0. 
$$ 
In view of Lemma \ref{delta_op_lem}, we deduce that $U \in W^2_2(\Rt)$. By restrincting to $\omm$ and to ${\Rt \backslash \bomm}$ we get  $u_{|\omm} = U_{|\omm} + u_0  \in H^2( \omm)$ and $u_{|\Rt \backslash \bomm} = U_{|\Rt \backslash \bomm} \in W^2_2(\Rt \backslash \bomm)$. Suppose now that 
  $\Mg.\n = 0$ on $\pt \omm$. Then, 
 $\div(\Mg \chi_\omm) \in L^2(\R^3)$.  Since $\div(\Mg \chi_\omm) = 0$ in $\R^2 \backslash \overline{\omm}$, we also deduce that
$\div(\Mg \chi_\omm)  \in W^0_2(\R^3)$. In view of Lemma \ref{delta_op_lem}, we deduce that $u \in W^2_2(\Rt)$.  This ends the proof of Proposition \ref{regul_propo}. 
 \end{ourproof}

Let us finish this section with an observation. In view of equations \eqref{decompoLaplSyst} the potential $u$ can be as
written as
\begin{proposition}\label{decompos_solution}
Suppose that $\Mg \in H(\div; \omm)$. Let $u \in W^1_0(\R^3)$  be the unique solution of \eqref{main_equa}. Then, 
\begin{equation}
u = u_0 + u_1, 
\end{equation}
 where $u_0  \in W^2_1(\R^3)$ is the unique solution of the Poisson equation 
 \begin{equation}\label{solu_uzsepare}
 \Delta u_0 = \widetilde{\div \Mg} \mbox{ in } \R^3,
  \end{equation}
  where $\widetilde{\div \Mg} $ designates the extension of ${\div \Mg} $  by zero outside $\omm$, while
 $u_1 \in W^1_0(\R^3)$ is the unique solution of the system
 \begin{equation}\label{formula_uone}
\int_{\R^3} \grad u_1 . \grad v dv = \<\Mg . \n, v\>_{H^{-1/2}(\pt \omm), H^{1/2}(\pt \omm)} \mbox{ for all } v \in W^1_0(\R^3). 
  \end{equation}
  Moreover,  $u_0\in W^2_2(\R^3)$ iff $ \<\Mg . \n, 1\> = 0$. In that case $u_1\in W^1_1(\R^3)$. 
 \end{proposition}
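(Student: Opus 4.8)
The plan is to produce $u_0$ and $u_1$ directly, to check that they solve \eqref{solu_uzsepare} and \eqref{formula_uone}, and then to deduce the last two assertions from the behaviour of $\Delta$ in the weighted scale $W^m_\ell(\R^3)$. Throughout I take $\omm$ bounded, as in Proposition \ref{regul_propo}; this is the setting in which \eqref{formula_uone} makes literal sense, since then the normal trace $\Mg\cdot\n\in H^{-1/2}(\pt\omm)$ is defined for $\Mg\in H(\div;\omm)$, the restriction to $\omm$ of every $v\in W^1_0(\R^3)$ lies in $H^1(\omm)$, and $\widetilde{\div\Mg}$, which is in $L^2(\R^3)$ with support in $\overline{\omm}$, automatically belongs to $W^0_1(\R^3)$ and to $W^0_2(\R^3)$.

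First I would define $u_0\in W^2_1(\R^3)$ to be the solution of \eqref{solu_uzsepare}; this is legitimate because $\Delta:W^2_1(\R^3)\to W^0_1(\R^3)$ is an isomorphism --- a companion of Lemma \ref{delta_op_lem}, also contained in \cite{amrouche1} --- no compatibility condition being needed here since constants do not belong to $W^2_1(\R^3)$. Comparing weights shows $W^2_1(\R^3)\subset W^1_0(\R^3)$, hence $u_0\in W^1_0(\R^3)$. Next I set $u_1:=u-u_0\in W^1_0(\R^3)$ and verify \eqref{formula_uone}. Fix $v\in W^1_0(\R^3)$. By density of $\D(\R^3)$ in $W^1_0(\R^3)$ together with the continuity of both sides --- which uses $\widetilde{\div\Mg}\in W^0_1(\R^3)\hookrightarrow W^{-1}_0(\R^3)=(W^1_0(\R^3))'$ via the splitting $fv=(1+|\x|^2)^{1/2}f\cdot(1+|\x|^2)^{-1/2}v$ --- the weighted Green identity
\[
\int_{\R^3}\grad u_0\cdot\grad v\,dx=-\int_{\R^3}(\Delta u_0)\,v\,dx=-\int_\omm(\div\Mg)\,v\,dx
\]
holds. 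Subtracting it from \eqref{forma_weak} yields
\[
\int_{\R^3}\grad u_1\cdot\grad v\,dx=\int_\omm\Mg\cdot\grad v\,dx+\int_\omm(\div\Mg)\,v\,dx=\int_\omm\div(v\,\Mg)\,dx=\langle\Mg\cdot\n,v\rangle_{H^{-1/2}(\pt\omm),H^{1/2}(\pt\omm)},
\]
the last equality being the Green formula for $\Mg\in H(\div;\omm)$ applied to $v_{|\omm}\in H^1(\omm)$. Uniqueness of $u_1$ in $W^1_0(\R^3)$ then follows from Lax--Milgram: the bilinear form is coercive on $W^1_0(\R^3)$ by \eqref{hardy_inequa2}, as in the proof of Proposition \ref{well_posed_prop}, while $v\mapsto\langle\Mg\cdot\n,v\rangle_{H^{-1/2}(\pt\omm),H^{1/2}(\pt\omm)}$ is continuous on $W^1_0(\R^3)$, being the composition of the restriction $W^1_0(\R^3)\to H^1(\omm)$, the trace $H^1(\omm)\to H^{1/2}(\pt\omm)$ and the fixed element $\Mg\cdot\n$. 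This gives $u=u_0+u_1$ with the stated properties.

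For the two remaining claims I would invoke Lemma \ref{delta_op_lem} with $m=1$, namely that $\Delta:W^2_2(\R^3)\to W^0_2(\R^3)\perp\R$ is an isomorphism. Since $\widetilde{\div\Mg}\in W^0_2(\R^3)$, $W^2_2(\R^3)\subset W^2_1(\R^3)$, and $u_0$ is the unique $W^2_1(\R^3)$-solution of \eqref{solu_uzsepare}, one gets $u_0\in W^2_2(\R^3)$ if and only if $\widetilde{\div\Mg}$ satisfies the orthogonality condition $\langle\widetilde{\div\Mg},1\rangle=0$, that is $\int_\omm\div\Mg\,dx=0$, which by the divergence formula for $\Mg\in H(\div;\omm)$ is precisely $\langle\Mg\cdot\n,1\rangle_{H^{-1/2}(\pt\omm),H^{1/2}(\pt\omm)}=0$. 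When this holds, $u_0\in W^2_2(\R^3)\subset W^1_1(\R^3)$ while $u\in W^1_1(\R^3)$ by Proposition \ref{asympto_behave}, so $u_1=u-u_0\in W^1_1(\R^3)$. (Equivalently, \eqref{formula_uone} shows $\Delta u_1=-\gamma^*(\Mg\cdot\n)$ in $\Dp(\R^3)$, a compactly supported element of $W^{-1}_1(\R^3)$ with $\langle\Delta u_1,1\rangle=-\langle\Mg\cdot\n,1\rangle=0$, so Lemma \ref{delta_op_lem} with $m=0$ again gives $u_1\in W^1_1(\R^3)$.)

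I expect the only non-routine point to be making rigorous the weighted Green identity and the continuity of the boundary pairing on $W^1_0(\R^3)$ --- equivalently, controlling the terms at infinity and matching the orthogonality conditions in Lemma \ref{delta_op_lem} to the correct weighted spaces. Both rest on the embedding $W^0_1(\R^3)\hookrightarrow W^{-1}_0(\R^3)$, on the continuity of the trace $W^1_0(\R^3)\to H^{1/2}(\pt\omm)$ for bounded $\omm$, and on the density of $\D(\R^3)$ in $W^1_0(\R^3)$ already used in the proof of Proposition \ref{well_posed_prop}; none of these is a genuine obstacle once $\omm$ is bounded. The only bibliographic loose end is the isomorphism $\Delta:W^2_1(\R^3)\to W^0_1(\R^3)$, which is not literally the $\ell=m$ case of Lemma \ref{delta_op_lem} but a standard instance of the theory developed in \cite{amrouche1}.
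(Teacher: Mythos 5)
Your argument is correct and follows essentially the same route as the paper: Lax--Milgram (equivalently, the weighted isomorphisms for $\Delta$) to produce $u_0$ and $u_1$, the Green formula $\int_{\omm}\Mg\cdot\grad v\,dx+\int_{\omm}(\div \Mg)\,v\,dx=\langle \Mg\cdot\n,v\rangle$ to reconcile the decomposition with \eqref{forma_weak}, and Lemma \ref{delta_op_lem} together with Proposition \ref{asympto_behave} for the final equivalence and for $u_1\in W^1_1(\R^3)$. You only make explicit two points the paper leaves implicit --- the isomorphism $\Delta:W^2_1(\R^3)\to W^0_1(\R^3)$ underlying the claim $u_0\in W^2_1(\R^3)$, and the verification that $u-u_0$ satisfies \eqref{formula_uone} --- so this is a fleshed-out version of the paper's proof rather than a different one.
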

 \begin{proof}
 Since $\div \Mg \in L^2(\omm)$, $\widetilde{\div \Mg}  \in L^2(\R^3) \inclus W^0_{-1}(\R^3)$. 
 Following the same argument as  in the proof Proposition \ref{well_posed_prop}, we easily deduce
 existence and uniqueness of $u_0$ and $u_1$ solutions of \eqref{solu_uzsepare} and \eqref{formula_uone}, respectively.
 From Lemma \ref{delta_op_lem}, we deduce that $u_0 \in W^2_2(\R^3)$ iff   $\widetilde{\div \Mg} \in W^0_2(\R^3) \perp \R$, that is
 $$
 \<\Mg . \n, 1\>=\int_{\omm} \div \Mg dx= 0. 
 $$
Since $u \in W^1_1(\R^3)$ (see Proposition \ref{asympto_behave}),
 we also have $u_1 = u-u_0 \in W^1_1(\R^3)$. 
  \end{proof}
  \begin{remark}
 Under assumptions of Proposition \ref{decompos_solution}, we can write (see also, e. g., \cite{ExlAuz})
 $$
 u = -{\mathcal N} (\div \Mg) +  {\mathcal V} (\Mg . \n), 
 $$
 where ${\mathcal N}$ is the Newton potential defined by
 $$
 {\mathcal N} w (\x) = \frac{1}{4\pi} \int_{\omm} \frac{w(\y)}{|\x-\y|} d\y, 
 $$
 while ${\mathcal V}$ is the single layer potential  defined by
  $$
    {\mathcal V}   \phi (\x) = \int_{\pt \omm} \frac{\phi(\y)}{|\x-\y|} d\sigma(\y). 
  $$
When $\<\Mg . \n, 1\> \ne 0$,  ${\mathcal N} (\div \Mg) $ and ${\mathcal V} (\Mg . \n)$ decreases more slowly 
than $u$ when $|\x| \to +\infty$. In fact, in view of Proposition \ref{decompos_solution}, $u_0 = - {\mathcal N} (\div \Mg) \in W^2_1(\R^3) \inclus W^1_0(\R^3)$ and
$u_1 = {\mathcal V} (\Mg . \n) \in  W^1_0(\R^3)$ while $u_{\R^3 \backslash \bom}\in W^2_2(\R^3 \backslash \bom) \inclus  L^2(\R^3 \backslash \bom)$. Indeed,
when $\Mg$ is sufficiently smooth, we have
$$
|u_0 (\x)| = O(\frac{1}{|\x|}), \; |u_1(\x)|= O(\frac{1}{|\x|}), \; \mbox{ when } |\x| \to +\infty, 
$$
while, in view of formula \eqref{green_formula}, we have
$$
|u(\x)| = O(\frac{1}{|\x|^2}), \;  \mbox{ when } |\x| \to +\infty. 
$$
\end{remark}
$\;$\\
At this stage, mathematical aspects concerning equation  \eqref{main_equa} are prepared. It remains to show the way in which this problem is discretized by inverted finite elements method. This will be done in the next section.  
\section{Inverted finite elements method}\label{ifem_presentation}
Inverted finite elements method was developed by Boulmezaoud \cite{boulmezaoudm2an}. We will tailor it here
for solving problem \eqref{main_equa}. The starting point consists to partition the whole space $\R^3$ into two subdomains
\begin{equation}\label{decompoRt}
{\R}^{3} = {\bomz}\cup {\bominft}. 
\end{equation}
Here $\omz$ is bounded region while $\ominft$ is an unbouded one. We should note immediately  that the bounded  $\omz$ is not intented to  be large. In particular, we do not rule out the possibility that $\omz = \emptyset$ and $\ominft = \R^3$. However, the following constraint is imposed to  $\ominft$ (or, indirectly, to $\omz = \R^3\backslash \overline{\ominft}$): $\ominft$ is the non-overlapping union  of a finite number of infinite  tetrahedra, that is
\begin{equation}\label{decompo_inft}
{\bominft} =  T_1 \cup T_2 \cup \dots \cup T_M, 
\end{equation}
with $T_1$,..,$ T_M$, $M \geq 1$,  are $M$ infinite tetrahedra satisfying the  assumptions
\begin{itemize}
\item $T_1$,..,$ T_M$  have a common fictitious vertex.  Subsequently, we assume that this common fictitious vertex is the origin. 
\item the intersection of  two arbitrary   infinite  tetrahedra $T_i$ and $T_j$, $1 \leq i < j \leq M$, is either the empty set, a whole edge (a half-line) or a whole unbounded face.         
\end{itemize}
The concept of infinite  tetrahedron and,  more generally, of infinite simplices, was introduced in \cite{boulmezaoudm2an}.  For the sake of  clarity, we recall here this concept in 3D configurations. Given four non-coplanar points $\a_0$, $\a_1$, $\a_2$ and $\a_3$ of the euclidian affine space $\R^3$, define  the infinite tetrahedon $T$ whose vertices are  $\a_0$, $\a_1$, $\a_2$ and $\a_3$ as the set of all the points $\x$ which take the form
$$
\x = \lambda_0 \a_0 + \lambda_1 \a_1 + \lambda_2 \a_2 + \lambda_3 \a_3, \; \sum_{i=0}^3 \lambda_i = 1,
$$
with $\lambda_0 \leq 0$, $\lambda_i \geq 0$ for $1 \leq i \leq 3$. It is usual to call $\a_0$ the fictitious vertex of $T$, while 
$\a_1$, $\a_2$ and $\a_3$ are called the real vertices.  It is worth noting that $T$ is closed and convex. The tetrahedron $S_T$,  associated to $T$, is the convex hull of the points $\a_0$, $\a_1$, $\a_2$ and $\a_3$.  The {\it altitude vector} of $T$ is $\h_{T} = \pi_T \a_0 - \a_0$,  where $\pi_T \a_0$ is the  orthogonal projection of $\a_0$ on the affine plane containing $\a_1$, $\a_2$ and $\a_3$
 and  separating $T$ and $S_T$.  Notice that
 \begin{equation}\label{prop_alt}
 \forall \x \in T \cap S_T,  \; \h_T.(\x-\a_0) =  |\h_T|^2.
 \end{equation}
   \begin{figure}[ht] 
\centering
  \includegraphics[width=5.5cm]{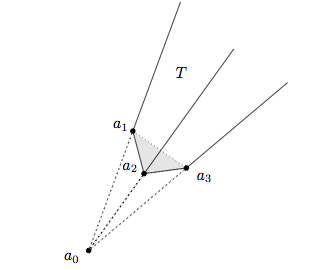}
\caption{An example of an infinite tetrahedron. }
\label{fig4}
\end{figure}
 Let us go back now to the decomposition \eqref{decompo_inft}. We should also note
 that $M$, the number of infinite tetrahedra, is not intended to be large. This is just a domain decomposition
 of a $\ominft$ in which the subdomains are  infinite tetrahedra and are fixed once for all. In practice, $M$ is often small ($M=3, 4,...$).  An example is illustrated in Figure \ref{big_tetrahedron_fig} where $\omz$ is a big  tetrahedron centered at  the origin and 
 $\ominft$ is the union of $4$ infinite tetrahedra (see also section \ref{numres_sect} hereafter). Another possibility consists to choose $\omz$ as the octahedron $\{ \x = (x_1, x_2, x_3)\in \R^3 \bve |x_1|+|x_2|+|x_3| < R \}$ and $\ominft = \R^3 \backslash \overline{\omz}$ as the union of $8$ infinite tetrahedra (see Figure \ref{octahedra_fig}). 
  \begin{figure}[ht] 
\centering
  \includegraphics[width=7cm]{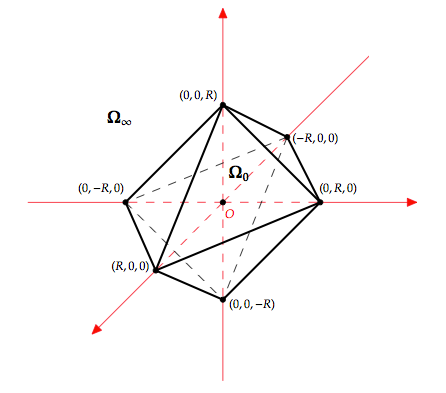}
\caption{A decomposition of $\R^3$ in which $\omz$ is the octahedron $\{ \x = (x_1, x_2, x_3)\in \R^3 \bve |x_1|+|x_2|+|x_3| < R \}$. Here $\ominft$ is the union of 8 infinite tetrahedra. }
\label{octahedra_fig}
\end{figure}
Subsequently, we denote by $S_i$, $1 \leq i \leq M$, the tetrahedron associated to $T_i$ and by $\h_i$ the altitude vector corresponding of $T_i$.  We have
$$
\overline{\omz} \cap \overline{\ominft}  = \cup_{i=1}^M (T_i \cap S_i).
$$
Set 
\begin{equation}
r_i(\x) = \frac{\h_i . \x}{|\h_i|^2} \mbox{ for } \x \in S_i \cup T_i. 
\end{equation}
Since $\zero$ is the fictitious vertex of each $T_i$ and in view of \eqref{prop_alt} it can  easily be proved that
$$
r_i(\x) \geq 1 \mbox{ for }  \x \in T_i, \; r_i(\x) \leq  1 \mbox{ for } \in S_i, \mbox{ and }  r_i(\x) =   1 \mbox{ for } \x \in T_i \cap S_i. 
$$
In terms of  the local barycentric coordinates in $S_i$, $(\lambda^{(i)}_0, \lambda^{(i)}_1, \lambda^{(i)}_2, \lambda^{(i)}_3)$, we can write
 $$
 r_i(\x) = 1-\lambda^{(i)}_0(\x)  = \sum_{k=1}^3 \lambda^{(i)}_k(\x) \mbox{ for } \x \in T_i \cup S_i.
 $$ 
The following continuity property holds true: if $T_i$ and $T_j$ are neighbors, then
\begin{equation}\label{neighbor_cont}
r_i(\x) = r_j(\x) \mbox{ for all } \x \in T_i \cap T_j.
\end{equation}
The local   polygonal inversion associated to
$T$ is defined as 
\begin{equation}\label{inversion_mapping}
\begin{array}{rrll}
\phi_i \; \; :& \; (S_i \cup T_i)\backslash \{\zero\}& \longrightarrow & (S_i \cup T_i)\backslash  \{\zero\}, \\ 
&\x &\mapsto&  \dps{ \frac{\x}{r_i(\x)^2}. }
\end{array}
\end{equation}
 Obviously $\phi_i$ is a bijection between $S_i$ and $T_i$. It is also an involution which preserves 
 $T_i \cap S_i$, that is $\phi_i(\x) = \x$ for $\x \in T_i \cap S_i$.  \\
Define now the {\it global polygonal inversion} $\phi$  from $\R^3 \setminus \{0\}$ into itself
and the  {\it  global polygonal radius} $r(.)$ as follows
$$
r(\x) = r_i(\x) \mbox{ and } \phi(\x) =\phi_i (\x)   \mbox{ for   all } \x \in T_i \cup S_i,  1\leq i \leq M.
$$
In virtue of property \eqref{neighbor_cont},  $r$ and $\phi$ are well defined and {\it continuous} on $\R^3$. Moreover,
$\phi$ maps $\ominft$ into $\omz$ and conversely. We have 
\begin{equation}
\phi(\x) = \x \mbox{ for all } \x \in \overline{\omz} \cap \overline{\ominft}. 
\end{equation}
Moreover, the exists two constants $c_1 > 0$ and $c_2 >0$ such that 
 $$
c|\x| \leq r(\x) \leq  c_2 |\x| \mbox{ for  all } \x \in \Rt, 
$$
In other words, $r(\x) \sim |\x|$ and  $|\phi(\x)| \sim {|\x|^{-1}} \mbox{ for } \x \in \Rt$. In the sequel, 
$\oms$ stands  for the image of $\ominft$ by the inversion $\phi$. From a strictly mathematical 
point of view $\oms = \omz \backslash \{\zero\}$. However, since $\omz$ and $\oms$ will be meshed differently. We will therefore deal with them separately. \\
 We now consider a family of pairs of 
triangulations $(\Tauh, \Taush)_h$  where 
\begin{itemize}
\item $(\Tauh)_h$ are  regular  triangulations of $\omm_0$ satisfying the usual conformity assumptions (see \cite{boulmezaoudm2an} or \cite{ciarlet}). In particular, elements of $\Tauh$ are supposed shape regular in the usual sense:  there exists a constant $c_0 > 0$ not depending on $h$ such that 
\begin{equation}\label{shapereg_tauh0}
\max_{K \in \Tauh \cup \Taush } \frac{h_K}{\rho_K} \leq c_0.
\end{equation}
Here $h_K$ and $\rho_K$ are respectively the diameter of $K$ and the diameter of the sphere inscribed inside of the tetrahedron $K$.  
\item $(\Taush)_h$ are  regular triangulations of $\oms$ which satisfies, besides \eqref{shapereg_tauh0},   the  following additional requirement: 
\begin{itemize}
\item for each $\Ks \in \Taush$, there exists $i \leq M$ such that $\Ks \subset S_i$ (in other words, $\Taush$ is a conforming union of triangulations of the subdomains $S_1$,..,$S_M$). 
\item the triangulations $(\Taush)_h $ are $\mu$-graded, where $\mu \in (0, 1]$ is a fixed parameter. That means that there exists three constants $\cst_1 > 0$, $\cst_2 > 0$ and $\cst_3 > 0$, not depending on $h$, such that
\begin{equation}\label{shapereg_tauh1}
\max_{K \in \Taussh} \frac{h_K}{d^{1-\mu}_K} \leq \cst_1 h, \;
\end{equation}
\begin{equation}\label{shapereg_tauh2}
\max_{K \in \Taush \backslash \Taussh} h_K \leq \cst_2 h^{1/\mu}, \;
\end{equation}
\begin{equation}\label{shapereg_tauh3}
\min_{\Ks \in \Taussh} d_{\Ks} \geq \cst_3  h^{1/\mu}, 
\end{equation}
where $\Taussh = \{ K \in \Taush \bve \zero \not \in K\}$ (elements not touching the origin),   $d_K = \inf_{\x \in K} \nrm{\x}$  for all $K \in \Taush$, and
$$
 h = h(\Tauh, \Taush)  = \max_{K \in \Tauh \cup  \Taush }  h_K,
$$
\end{itemize}
Conditions \eqref{shapereg_tauh1}, \eqref{shapereg_tauh2} and \eqref{shapereg_tauh3} mean that tetrahedra of 
 $\Taush$ which are adjacent to $\pt \oms\cap \pt \ominft$ have a size of order $h$, while those touching the fictitious vertex $\zero$ have a size  of order $h^{1/\mu}$.  Construction of graded meshes is detailed  in \cite{boulmezaoudm2an}. 
\item $\Tauh$ and $\Taush$ have the same  vertices, edges and faces on the common boundary $\omz \cap \ominft = \pt\omz=\pt \oms\backslash \{0\}$. 
 \end{itemize}
In the sequel,  given a function  $\v$ defined over   $\ominft$,  $\hv$ stands for
the function 
defined on $\oms$ as follows 
\begin{equation}\label{formule_hv}
 \hv(\xs) = \frac{1}{r(\xs)^{\gamma } }v(\phi(\xs)), \; \mbox{ for } \xs \in \oms. 
\end{equation}
with $\gamma > 0$ a parameter. Conversely, we have
\begin{equation}
 v(\x) = \frac{1}{r(\x)^{\gamma}\ }\hv(\phi(\x)), \; \mbox{ for } \x \in \ominft. 
\end{equation}
 Now, let $k \geq 1$ be a fixed integer and consider the finite dimensional space
  $$ 
 \Wh = \lbrace v \in \CC^{0}(\Rt) \bve  \forall K \in \mathcal{T}_{h}, v _{\vert K}\in (\P_{k})^3; \;\; \forall K^{*} \in \mathcal{T}_{h}^{*}, \hv_{\vert K^{*}}\in (\P_{k})^3, \, \hv(\zero)  = 0 \rbrace.
 $$
 We may observe that functions of $\Wh$ are piecewise polynomial in the FEM region $\omm_0$, but not in the IFEM
 region $\ominft$. The last observation is due to distorsion resulting from the composition with the inversion 
 and the multiplicative factor  involved in formula \eqref{formule_hv}. \\
Another observation concerns the behavior at large distances of functions belonging to $\Wh$. Let ${v}\in  \Wh$.  Then, $v \in H^1_{loc}(\R^3)^3$. Moreover, since $\hv (\zero) = 0$, we have 
 $$
|\hv(\xs)| \leq C_0 |\xs|, \mbox{ for all } \xs \in \oms,
$$
for some constant $C_0$, not depending on $\xs$.  It follows that  for all  $\x \in  \ominft$, we have 
$$
|v(\x)| = |r(\x)|^{-\gamma} |\hv(\phi(\x))| \leq C|\x|^{-\gamma } |\phi(\x)| \leq C |\x|^{-\gamma -1}.
$$
In similar way, we prove that
$$
|\grad v (\x)| \leq C |\x|^{-\gamma-2}.
$$
We deduce this
\begin{equation}
\gamma > -\frac{1}{2} \Longrightarrow \Wh\inclus W^1_0(\Rt)^3.
\end{equation}
This justifies the following assumption on $\gamma$: 
\begin{equation}
\gamma > -\frac{1}{2}. 
\end{equation}
The discrete problem writes: find $u_h \in  \Wh$ such that
\begin{equation}\label{discrete_pb}
\forall w_h \in \Wh, \; \int_{\R^3} \grad u_h. \grad w_h dx  = \int_{\omm} \Mg.\grad w_h dx. 
\end{equation}
The corresponding stray-field energy is given by
\begin{equation}\label{discrete_sf_energy}
\EESTH(u_h) =  \int_{\R^3} |\grad u_h|^2 dx =  \int_{\omm} \Mg.\grad u_h dx. 
\end{equation}
We have
 \begin{proposition}\label{error_estimate}
The discrete problem \eqref{discrete_pb} has one and only one solution $u_h \in \Wh$ and $\EESTH(u_h) \leq \EEST(u)$. If in addition, $\gamma   >  0$ and  $u \in W^{k+1}_{k+\gamma}(\Rt)$, then   
\begin{eqnarray}
\|u -u_{h}\|_{W^{1}_{0}(\Rt)^{3}}&\leq& C_1 h^{\tau k}\|u\|_{W^{k+1}_{k + \gamma}(\Rt)^{3}}, \label{er_estimate} \\
0 \leq \EEST(u) - \EESTH(u_h) &\leq& C_2  h^{\tau k}   \|\Mg\|_{L^2(\omm)}   \|u\|_{W^{k+1}_{k+\gamma}(\Rt)^{3}}, \label{er_energy}
\end{eqnarray}
where $C_1$ and $C_2$ are two constants not depending on $h$, $\Mg$ and $u$,  and 
\begin{equation}\label{formule_tau}
\tau = \min (\frac{\gamma}{\mu k}, 1). 
\end{equation}
 \end{proposition}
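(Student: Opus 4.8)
The plan is to dispose first of the parts of the statement that need no regularity on $u$ --- the well-posedness of \eqref{discrete_pb} and the inequality $\EESTH(u_h)\leq\EEST(u)$ --- then to reduce the two error bounds to a single interpolation estimate by a C\'ea-type argument, and finally to prove that interpolation estimate by transporting $u$ through the polygonal inversion and exploiting the grading of $\Taush$. Since $\gamma>-1/2$ we have $\Wh\inclus W^1_0(\Rt)$, so the bilinear form $a(v,w)=\int_{\Rt}\grad v\cdot\grad w\,dx$ and the linear form $w\mapsto\int_{\omm}\Mg\cdot\grad w\,dx$ restrict continuously to $\Wh$. Hardy's inequality \eqref{hardy_inequa2}, valid on all of $W^1_0(\Rt)$, gives $\|v\|^2_{W^1_0(\Rt)}\leq 5\,\|\grad v\|^2_{L^2(\Rt)}$, hence $a$ is coercive on $\Wh$; as $\Wh$ is finite dimensional, the Lax--Milgram theorem (or plain linear algebra) provides the unique $u_h$. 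Subtracting \eqref{discrete_pb} from \eqref{forma_weak} tested against $w_h\in\Wh$ yields the Galerkin orthogonality $\int_{\Rt}\grad(u-u_h)\cdot\grad w_h\,dx=0$; taking $w_h=u_h$ and expanding the square,
\[
\EEST(u)-\EESTH(u_h)=\tfrac12\bigl(\|\grad u\|^2_{L^2(\Rt)}-\|\grad u_h\|^2_{L^2(\Rt)}\bigr)=\tfrac12\,\|\grad(u-u_h)\|^2_{L^2(\Rt)}\geq 0,
\]
which is both $\EESTH(u_h)\leq\EEST(u)$ and the left inequality in \eqref{er_energy}.

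By the same Galerkin orthogonality, $u_h$ is the orthogonal projection of $u$ onto $\Wh$ for the inner product $a(\cdot,\cdot)$, so $\|\grad(u-u_h)\|_{L^2(\Rt)}\leq\|\grad(u-w_h)\|_{L^2(\Rt)}$ for every $w_h\in\Wh$; combined with Hardy's inequality this gives $\|u-u_h\|_{W^1_0(\Rt)}\leq\sqrt5\,\inf_{w_h\in\Wh}\|u-w_h\|_{W^1_0(\Rt)}$. Thus \eqref{er_estimate} follows once we exhibit some $w_h\in\Wh$ with $\|u-w_h\|_{W^1_0(\Rt)}\leq C\,h^{\tau k}\|u\|_{W^{k+1}_{k+\gamma}(\Rt)}$. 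Granting such a $w_h$, estimate \eqref{er_energy} comes from the displayed identity together with $\|\grad(u-u_h)\|^2_{L^2(\Rt)}=\int_{\Rt}\grad(u-u_h)\cdot\grad(u-w_h)\,dx\leq\|\grad(u-u_h)\|_{L^2(\Rt)}\,\|\grad(u-w_h)\|_{L^2(\Rt)}$ and the a priori bound $\|\grad(u-u_h)\|_{L^2(\Rt)}\leq\|\grad u\|_{L^2(\Rt)}+\|\grad u_h\|_{L^2(\Rt)}\leq 2\,\|\Mg\|_{L^2(\omm)}$ that results from \eqref{estima_grad} and $\EESTH(u_h)\leq\EEST(u)$.

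To build $w_h$ I would work separately on $\omz$ and on $\ominft$ and glue along $\bomz\cap\bominft$. On $\omz$, which is bounded, the weight $1+|\x|^2$ is bounded above and below, so $u|_{\omz}\in H^{k+1}(\omz)$ with $\|u\|_{H^{k+1}(\omz)}\leq C\,\|u\|_{W^{k+1}_{k+\gamma}(\Rt)}$; take $w_h|_{\omz}$ to be the standard degree-$k$ Lagrange interpolant on the shape-regular mesh $\Tauh$, so that classical finite element theory gives $\|u-w_h\|_{H^1(\omz)}\leq C\,h^k\|u\|_{H^{k+1}(\omz)}$, which is $\leq C\,h^{\tau k}\|u\|_{W^{k+1}_{k+\gamma}(\Rt)}$ since $\tau\leq1$. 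On $\ominft$ I pass to the image domain $\oms$ through the inversion $\phi$ and work with $\hu(\xs)=r(\xs)^{-\gamma}u(\phi(\xs))$, defining $w_h(\x)=r(\x)^{-\gamma}\widehat{w_h}(\phi(\x))$ for $\x\in\ominft$, where $\widehat{w_h}$ is the degree-$k$ Lagrange interpolant of $\hu$ on $\Taush$ with the value $0$ imposed at $\zero$. The change-of-variables identities for the polygonal inversion established in \cite{boulmezaoudm2an} show that, under the hypothesis $\gamma>0$, $\hu$ extends continuously by $0$ at $\zero$ and the $W^{k+1}_{k+\gamma}(\ominft)$-norm of $u$ becomes a norm of $\hu$ over $\oms$ in which the only weight is a power of $|\xs|$ --- controlling the derivatives of $\hu$ away from $\zero$ while permitting a prescribed blow-up as $\xs\to\zero$. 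Because $\phi$ is the identity and $r\equiv1$ on $\bomz\cap\bominft$ and $\Tauh$, $\Taush$ share vertices, edges and faces there, the two pieces agree on the interface and $w_h\in\Wh$.

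The remaining --- and, I expect, the delicate --- step is to estimate $\|u-w_h\|_{W^1_0(\ominft)}$, which the same change of variables turns into a weighted $H^1$-norm of $\hu-\widehat{w_h}$ over $\oms$, the weight being a power of $|\xs|$. On each element $K\in\Taussh$ not touching $\zero$, an affine scaling together with the Bramble--Hilbert lemma bounds the local interpolation error by $C\,(h_K/d_K)^k$ times an appropriately weighted $W^{k+1}$-seminorm of $\hu$ over $K$; the grading condition \eqref{shapereg_tauh1} converts $(h_K/d_K)^k$ into a quantity $\leq(\cst_1 h)^k d_K^{-k\mu}$, and, combined with the explicit power of $d_K$ carried by the seminorm, the summation over these elements yields a contribution bounded by $C\,h^k\|u\|_{W^{k+1}_{k+\gamma}(\Rt)}$ when $\gamma\geq\mu k$ and by $C\,h^{\gamma/\mu}\|u\|_{W^{k+1}_{k+\gamma}(\Rt)}$ when $\gamma<\mu k$. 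On the finitely many elements touching $\zero$, where $h_K\leq\cst_2\,h^{1/\mu}$ by \eqref{shapereg_tauh2} and $\widehat{w_h}(\zero)=\hu(\zero)=0$, a direct estimate using \eqref{shapereg_tauh3} to bound the weight from below produces a contribution of the same order $h^{\min(k,\gamma/\mu)}$. Adding the two regions and recalling $h^k\leq h^{\tau k}$ for $h$ small gives $\|u-w_h\|_{W^1_0(\Rt)}\leq C\,h^{\tau k}\|u\|_{W^{k+1}_{k+\gamma}(\Rt)}$ with $\tau=\min(\gamma/(\mu k),1)$ as in \eqref{formule_tau}, whence \eqref{er_estimate} and then \eqref{er_energy}. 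The difficulty here is entirely in the bookkeeping --- tracking the precise weight exponents produced by the polygonal inversion and checking that the infinitely many elements of $\Taush$ clustering at $\zero$ sum to a finite quantity of the announced order --- and it is carried out by adapting the weighted-interpolation estimates of \cite{boulmezaoudm2an}.
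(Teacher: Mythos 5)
Your proposal is correct and follows essentially the same route as the paper: coercivity via Hardy's inequality for well-posedness, C\'ea's lemma reducing \eqref{er_estimate} to the weighted interpolation estimate of \cite{boulmezaoudm2an} on the graded mesh, and the identity expressing the energy gap in terms of the gradient error. The only (harmless) variations are that you obtain $\EESTH(u_h)\leq\EEST(u)$ from Galerkin orthogonality and the Pythagorean identity rather than from the minimization formulation of $F$, and you sketch the interpolation estimate that the paper simply cites; incidentally, your identity $\EEST(u)-\EESTH(u_h)=\tfrac12\|\grad(u-u_h)\|^2_{L^2(\Rt)}$ combined with \eqref{er_estimate} already yields the rate $h^{2\tau k}$ for the energy, i.e.\ the superconvergence the paper only observes numerically.
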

Proposition \ref{error_estimate} states in particular that if $u \in W^{k+1}_{k+\gamma}(\Rt)$ and if the mesh of $\oms$ is graded enough  ($\mu \leq \frac{\eta}{k}$), then the error is similar to that held in the finite element method in bounded domains, that is 
 \begin{equation}\label{estima_optimal1}
\|u -u_{h}\|_{W^{1}_{0}(\Rt)^{3}}\leq C h^{k}\|u\|_{W^{k+1}_{k+\gamma}(\Rt)^{3}}. 
\end{equation}
We also have
\begin{equation}\label{estima_optimal2}
 0 \leq \EEST(u) - \EESTH(u_h) \leq C_2  h^{k}  \|\Mg\|_{L^2(\omm)} \|u\|_{W^{k+1}_{k+\gamma}(\Rt)^{3}}.
\end{equation}
When smoothness of $u$ is only local, we have this 
\begin{proposition}\label{error_estimate_bis}
Assume  $u_{|\Rt \backslash \bomm}  \in  W^{k+1}_{k+\gamma}(\Rt \backslash \bomm)$ and $u_{| \omm} \in  H^{k+1}(\omm)$. Assume also that $\omm \subset \omz$ and  that  $\Tauh_{|\omm}$ is a triangulation of $\omm$.   Then, 
\begin{equation}\label{er_estimate_bis}
\|u -u_{h}\|_{W^{1}_{0}(\Rt)^{3}}\leq C_1 (h^{\tau k}\|u\|_{W^{k+1}_{k + \gamma}(\ominft)} + h^{k}\|u\|_{H^{k+1}(\omm)}+h^{k}\|u\|_{H^{k+1}(\omz \backslash \bomm)}),
\end{equation}
\begin{equation}\label{er_energy_bis} 
0 \leq  \EEST(u) - \EESTH(u_h) \leq C_2    \|\Mg\|_{L^2(\omm)}   (h^{\tau k} \|u\|_{W^{k+1}_{k + \gamma}(\ominft)} + h^{k}\|u\|_{H^{k+1}(\omm)}+h^{k}\|u\|_{H^{k+1}(\omz \backslash \bomm)}).
\end{equation}
with $\tau$ given by \eqref{formule_tau}. 
\end{proposition}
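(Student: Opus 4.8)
The plan is to follow the Céa/quasi-optimality route already used for Proposition~\ref{error_estimate}, the only new point being that the global interpolant of $u$ must be assembled from three pieces: an ordinary Lagrange interpolant on $\omm$, an ordinary Lagrange interpolant on $\omz\setminus\bomm$, and the inverted (IFEM) interpolant on $\ominft$. This is exactly where the hypotheses $\omm\subset\omz$ and ``$\Tauh_{|\omm}$ triangulates $\omm$'' enter.

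\emph{Step~1 (reduction to approximation).} Arguing as in the proof of Proposition~\ref{well_posed_prop} but with trial and test functions in $\Wh$, problem \eqref{discrete_pb} has a unique solution $u_h\in\Wh\subset W^1_0(\R^3)^3$, and $\int_{\R^3}\grad(u-u_h)\cdot\grad w_h\,dx=0$ for every $w_h\in\Wh$. Since the bilinear form is symmetric and, by Hardy's inequality \eqref{hardy_inequa2}, coercive on $W^1_0(\R^3)$ with $\|\grad\cdot\|_{L^2(\R^3)}$ equivalent to $\|\cdot\|_{W^1_0(\R^3)}$, the function $u_h$ is the energy projection of $u$, so that
$$
\|u-u_h\|_{W^1_0(\R^3)^3}\ \leq\ C\,\inf_{w_h\in\Wh}\|u-w_h\|_{W^1_0(\R^3)^3}.
$$
Taking $w_h=u_h$ and using \eqref{discrete_pb} gives $\|\grad(u-u_h)\|_{L^2}^2=\|\grad u\|_{L^2}^2-\|\grad u_h\|_{L^2}^2=2\big(\EEST(u)-\EESTH(u_h)\big)\geq 0$; since moreover $\|\grad u_h\|_{L^2}\leq\|\grad u\|_{L^2}\leq\|\Mg\|_{L^2(\omm)}$ by \eqref{estima_grad}, we get $0\leq\EEST(u)-\EESTH(u_h)\leq\|\grad(u-u_h)\|_{L^2}\,\|\Mg\|_{L^2(\omm)}\leq\|u-u_h\|_{W^1_0(\R^3)^3}\,\|\Mg\|_{L^2(\omm)}$, so that \eqref{er_energy_bis} will follow from \eqref{er_estimate_bis}.

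\emph{Step~2 (a conforming three-piece interpolant).} Since $k\geq 1$ one has $H^{k+1}(\omm)\inclus\CC^0(\bomm)$ and $H^{k+1}(\omz\setminus\bomm)\inclus\CC^0(\overline{\omz\setminus\bomm})$; as $\Tauh$ resolves $\pt\omm$ by hypothesis, the continuous piecewise-$\P_k$ Lagrange interpolant $\Pi_h^{0}u$ of $u$ on $\omz$ is well defined (the two traces of $u$ on $\pt\omm$ coincide because $u\in W^1_0(\R^3)\subset H^1_{\loc}(\R^3)$), and the classical local estimates, summed over the elements of $\Tauh$, give
$$
\|u-\Pi_h^{0}u\|_{H^1(\omm)}\leq C\,h^k\|u\|_{H^{k+1}(\omm)},\qquad \|u-\Pi_h^{0}u\|_{H^1(\omz\setminus\bomm)}\leq C\,h^k\|u\|_{H^{k+1}(\omz\setminus\bomm)}.
$$
On $\ominft\subset\R^3\setminus\bomm$ we use the inverted interpolant: by the very choice of the weight index $k+\gamma$, the hypothesis $u_{|\R^3\setminus\bomm}\in W^{k+1}_{k+\gamma}(\ominft)$ means that the transplanted function $\hu$ of \eqref{formule_hv} has, on $\oms=\omz\setminus\{\zero\}$, the weighted $H^{k+1}$-regularity used in the graded-mesh analysis of Proposition~\ref{error_estimate}, with $\hu(\zero)=0$. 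Defining $\Pi_h^{\infty}u$ on $\ominft$ by requiring $\widehat{\Pi_h^{\infty}u}$ to be the Lagrange interpolant of $\hu$ on $\Taush$, the same argument as for Proposition~\ref{error_estimate} --- in which the grading conditions \eqref{shapereg_tauh1}--\eqref{shapereg_tauh3} absorb the behaviour of $\hu$ near the fictitious vertex and the distortion coming from $\phi$ and the factor $r^{-\gamma}$ is estimated as in \cite{boulmezaoudm2an} --- yields
$$
\|u-\Pi_h^{\infty}u\|_{W^1_0(\ominft)}\leq C\,h^{\tau k}\,\|u\|_{W^{k+1}_{k+\gamma}(\ominft)},\qquad \tau=\min\Big(\frac{\gamma}{\mu k},\,1\Big).
$$
Finally, on the common interface $\pt\omz=\pt\oms\setminus\{\zero\}$ one has $r\equiv 1$ and $\phi=\mathrm{id}$, hence $\hv=v$ there; since $\Tauh$ and $\Taush$ share vertices, edges and faces on $\pt\omz$, the nodal values of $\Pi_h^{0}u$ and of $\widehat{\Pi_h^{\infty}u}$ on $\pt\omz$ both equal those of $u$ and therefore coincide. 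Gluing the three pieces produces $\Pi_h u\in\CC^0(\R^3)\cap\Wh$.

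\emph{Step~3 (conclusion and main difficulty).} The sets $\omm$, $\omz\setminus\bomm$, $\ominft$ partition $\R^3$ up to a null set, and on the two bounded ones the weight $(|\x|^2+1)^{-1}$ is $\leq 1$, so $\|u-\Pi_h u\|_{W^1_0(\R^3)^3}\leq\|u-\Pi_h^{0}u\|_{H^1(\omm)}+\|u-\Pi_h^{0}u\|_{H^1(\omz\setminus\bomm)}+\|u-\Pi_h^{\infty}u\|_{W^1_0(\ominft)}$; inserting the three estimates of Step~2 into the quasi-optimality bound of Step~1 gives \eqref{er_estimate_bis}, and then \eqref{er_energy_bis} as explained. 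The only genuinely non-routine ingredient is the inverted-interpolation estimate on $\ominft$, but this is precisely the estimate already established for Proposition~\ref{error_estimate}; the novelty here is purely the localization --- requiring $\omm\subset\omz$ and $\Tauh$ to resolve $\pt\omm$ confines the region where $u$ fails to be globally $W^{k+1}_{k+\gamma}$-smooth to the bounded, meshed set $\omz$, where ordinary $h^k$ finite-element estimates apply, while on $\ominft$ the data are as smooth as in Proposition~\ref{error_estimate}.
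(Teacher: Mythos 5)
Your proposal is correct and follows essentially the same route as the paper: C\'ea's lemma reduces everything to the best-approximation error in $\Wh$, which is then split into the IFEM piece on $\ominft$ (governed by the graded-mesh estimate of \cite{boulmezaoudm2an} with rate $h^{\tau k}$) and the ordinary elementwise Lagrange estimates on $\omm$ and $\omz\setminus\bomm$ made possible by the hypotheses $\omm\subset\omz$ and that $\Tauh$ resolves $\pt\omm$, after which the energy bound follows from Galerkin orthogonality and $\|\grad u\|_{L^2}\leq\|\Mg\|_{L^2(\omm)}$. The only (immaterial) differences are that you construct the three-piece interpolant explicitly where the paper simply cites the combined approximation estimate, and that you bound the energy gap via $\EEST(u)-\EESTH(u_h)=\tfrac12\|\grad(u-u_h)\|_{L^2}^2$ instead of the paper's identity $\EEST(u)-\EESTH(u_h)=\tfrac12\int_{\omm}\Mg\cdot\grad(u-u_h)\,dx$; both yield the stated bound.
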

From Proposition \ref{regul_propo}, we know that if $\div \Mg \in L^2(\omm)$ and $ \Mg. \n \in H^{1/2}(\pt \omm)$, then $u_{|\Rt \backslash \bomm}  \in  W^{2}_{2}(\Rt \backslash \bomm)$ and $u_{| \omm} \in  H^{2}(\omm)$. With $k=1$ (P1 like elements) and $\gamma = 1$, we get the error estimate
\begin{equation} \label{er_on_u_ter}
\|u -u_{h}\|_{W^{1}_{0}(\Rt)^{3}}\leq C h (\|u\|_{W^{2}_{2}(\Rt\backslash \bomm)} + \|u\|_{H^{2}(\omm)}),
\end{equation}
\begin{equation} \label{er_energy_ter}
 0 \leq \EEST(u) - \EESTH(u_h) \leq C_2    \|\Mg\|_{L^2(\omm)}   h ( \|u\|_{W^{2}_{2}(\Rt\backslash \bomm)} + \|u\|_{H^{2}(\omm)}),   \end{equation}
for any $\mu \in (0, 1]$. This estimate  is similar to the usual finite element error for elliptic problems in 
bounded domain.  \\
\begin{ourproof}{of Propositions \ref{error_estimate} and \ref{error_estimate_bis}}
 Observe first that $u$ is also solution of the minimization problem
$$
\min_{v \in W^{1}_{0}(\Rt)^{3}} F(v), \; \mbox{ with } F(v)=  \frac{1}{2} \int_{\Rt}|\grad v|^2 - \int_{\Rt} \Mg.\grad v dx, 
$$
and, by virtue of \eqref{forma_weak}, we have 
$$
F(u) = - \frac{1}{2} \int_{\Rt}|\grad u|^2 dx  = -\EEST(\Mg), 
$$
Similarly, the approximate solution $u_h$ is solution of 
$$
\min_{\v_h \in \Wh} F(v_h), 
$$
and, in view of \eqref{discrete_pb}, we have 
$$
F(u_h) =  - \frac{1}{2} \int_{\Rt}|\grad u_h|^2 dx   = -   \EESTH(u_h), 
$$
Since $\Wh \subset W^{1}_{0}(\Rt)^{3}$, we deduce that $F(u_h)  \geq F(u)$. Thus, $\EESTH(u_h) \leq \EEST(u)$. \\
Now, C\'ea's lemma gives
$$
\|u -u_{h}\|_{W^{1}_{0}(\Rt)^{3}} \leq C_1 \inf_{w_h \in \Wh} \|u -w_{h}\|_{W^{1}_{0}(\Rt)^{3}}.
$$
for some constant $C_1$ not depending on $u$ nor on $h$.  In \cite{boulmezaoudm2an}, the following estimate is proven:
$$
\inf_{w_h \in \Wh} \|u-w_{h}\|^2_{W^{1}_{0}(\Rt)^{3}} \leq C (h^{2\tau k}  \|u\|^2_{W^{k+1}_{k + \gamma}(\ominft)}   +  h^{2k}  \sum_{K \in \Tauh} \|u\|^2_{H^{k+1}(K)}).  
$$
We easily get estimate \eqref{er_estimate} and \eqref{er_estimate_bis}.  In addition, we have
$$
 0 \leq \EEST(u) - \EESTH(u_h) = \frac{1}{2} \int_{\Rt}|\grad u|^2  dx - \frac{1}{2} \int_{\Rt}|\grad u_h|^2 dx = \frac{1}{2} \int_{\Rt}\Mg. (\grad u - \grad u_h) dx. 
$$
Thus, 
$$
  \EEST(u) - \EESTH(u_h)\leq  \frac{1}{2}  \|\Mg\|_{L^2(\omm)}. \|\grad u - \grad u_h\|_{L^2(\omm)}. 
$$
Combining with \eqref{er_estimate} gives \eqref{er_energy}.  Estimates \eqref{er_estimate_bis} and \eqref{er_energy_bis} are obtained by the same argument. 
\end{ourproof}

\section{Numerical results}\label{numres_sect}
The task of this section is to show some numerical results obtained with a 3D code writting
for solving \eqref{main_equa} with the following parameters: $k=1$ (P1 like elements). We use the following domain decomposition of $\R^3$:
\begin{itemize}
\item $\omz$ is the (big) tetrahedra whose vertices are
\begin{equation}\label{big_tetrahedron_vrtc}
\begin{array}{rclrcl} 
\a_1 &=& \dps{ \RZ (\frac{\sqrt{8}}{3}, 0, -\frac{1}{3})}, &  \a_2 &=& \dps{  \RZ (-\frac{\sqrt{2}}{3}, \sqrt{\frac{2}{3}},  -\frac{1}{3}), } \\
  \a_3 &=& \dps{  \RZ (-\frac{\sqrt{2}}{3},-  \sqrt{\frac{2}{3}},  -\frac{1}{3})},  &  \a_4 &=& \dps{ \RZ (0, 0,   1),  } 
\end{array}
\end{equation}
where $\RZ > 0$ is a size parameter (see Figure \ref{big_tetrahedron_fig}).
\item $\ominft = \R^3 \backslash \bomz$ is decomposed as the union of four infinite simplices $T_i$, $1 \leq i \leq 4$,  with the origin as a common fictitious vertex. The three real vertices $T_i$, $1 \leq i \leq 4$, are $(\a_j)_{1 \leq j \ne i \leq 4}$ (the bounded faces of $T_i$, $1 \leq i \leq 4$, are the faces of $\omz$). 
\end{itemize}
The code we write does not depend on the considered configuration. It only requires that
$\omm \subset \omz$. 
\begin{figure}[ht] 
\centering
   \includegraphics[width=6cm]{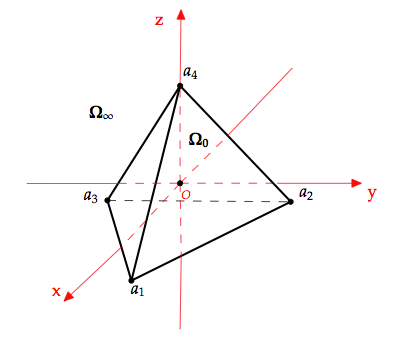}
\caption{The decomposition $\R^3 = \bominft \cup \bomz$ is used in implementation. Here $\bomz$  is a big tetrahedron whose vertices are given by formula \eqref{big_tetrahedron_vrtc}. } \label{big_tetrahedron_fig}
\end{figure}
In all the tests, we choose
$$
\gamma = 1.
$$
\subsection*{Numerical example 1 (homogeneously magnetized sphere)}
We consider the case of a ball 
$\omm =\{\x \in \R^3\bve  |\x| < \RAY\}$.  If $\Mg$ is constant, that is $\Mg = \mz$ for some unit vector field $\mz \in \R^3$, then 
the solution of \eqref{main_equa} is given by
\begin{equation}
u(\x) = \left\{
\begin{array}{ll}
\dps{ \frac{1}{3}{ \mz.\x } }& \mbox{ if } |\x| <  \RAY, \;  \\
\dps{\frac{ \RAY^3}{3}  \frac{\mz.\x}{|\x|^3} } & \mbox{ if } |\x| \geq   \RAY.
\end{array}
\right.
\end{equation}
In computational tests, we choose $\mz = (0, 0, 1)$, $r_0 = 0.5$ and $\RZ = 4$. In order to check the convergence 
of the method, we measure the following errors 
$$e_{0}(u) = \frac{\|u_{h} - u\|_{W^{0}_{-1}(\Rt)}}{\|u\|_{W^{0}_{-1}(\Rt)}},  \; e(\EEST) = \frac{ |\EEST(u) -\EESTH(u_h)|}{|\EEST(u)|}. $$
In the context of this example, the exact energy is given by 
\begin{equation}
\EEST(u) =  \frac{1}{2} \int_{\R^3} |\grad u|^2 dx =  - \frac{1}{2} \int_{\omm} \Mz.\vH dx  =  \frac{ 2 \pi |\mz|^2}{9}  \RAY^3 = 0.08723. 
\end{equation}
Obviously, $u_{|\omm} \in H^2(\omm)$. One can also check that $u_{|\Rt \backslash \bomm} \in W_2^2(\Rt \backslash \bomm)$. However,
 $ u \not \in W^2_2(\Rt)$ since $[\frac{\pt u}{\pt n}] = -\mz.\n \ne 0$ on $\pt \omm$. According to estimates  \eqref{er_energy_ter} and \eqref{er_energy_ter}
 errors $e_{0}(u)$ and $ e(\EEST)$ decrease as $h$ (for any gradation parameter $\mu$).  Table \ref{tab_exConstSphere} and Figure \ref{enrg_exConstSphere} 
 display these errors versus  $h$ for several values of  $\mu$. We may observe that $e_{0}(u)$ decreases as
 $h^{0.96}$ while $ e(\EEST)$ decreases as $h^{1.33}$.  The errors are
  essentially the same for $\mu=1$, $\mu = 0.7$ and $\mu =0.5$.  This is in accordance with estimates \eqref{er_on_u_ter}  and \eqref{er_energy_ter}.  
In Figure \ref{figm1} the approximate solution and the exact one  are displayed  versus $r$ when $x=y=0$.  It can be seen by a visual comparison 
 that these solutions are very close although the discontinuity of the normal component of $ \vH = -\grad u$ across
  the interface $\pt \omm$.
  \begin{table}[htp]
\begin{center}
\begin{tabular}{|c|c|c c c |ccc |}
  \hline
& & \multicolumn{3}{c|}{$\mu$} &  \multicolumn{3}{c|}{$\mu$}   \\ 
 \cline{3-8} 
   DoF & $h$&  1 & 0.7 & 0.5 & 1 & 0.75 & 0.5  \\  
 \cline{3-8}
    &  & \multicolumn{3}{c|}{$e_{0}(u)$ }&\multicolumn{3}{c|}{$e(\EEST)$  (energy error)}\\
  \hline
  875  & 1.131 & 0.292&0.284&0.280&   0.532 &0.523&0.517\\
   6750 & 0.565 &0.145&0.139&0.135&  0.322&0.315&0.312 \\
  22625  & 0.377 &0.101&0.098& 0.097&  0.157&0.152& 0.150\\
  53500 &0.282 &0.076&0.074&0.073&   0.127&0.124&0.123 \\
   104375  & 0.226 &0.064& 0.062&0.062&  0.089& 0.086&0.085 \\
   180250  &0.188 &  0.052&0.051&0.051&   0.073&0.072&0.071  \\
  427000&0.141 &0.040& 0.039& 0.039 &   0.043& 0.042& 0.041 \\
  833750 &0.113 &0.032&0.031&0.031&   0.025&0.024&0.024  \\
    \hline
    \multicolumn{2}{|c|}{The log. slope}&0.96 &0.96 &0.96&  1.33 &1.34 &1.33 \\ 
  \hline
\end{tabular} 
\end{center}
\caption{(Example 1) The relative errors $e_{0}(u)$  and $ e(\EEST)$ ($\RAY=0.5$, $\gamma = 1$, and  $\RZ = 4$).}
\label{tab_exConstSphere}
\end{table}
\begin{figure}[!h]
\centering
\includegraphics[width= 12cm]{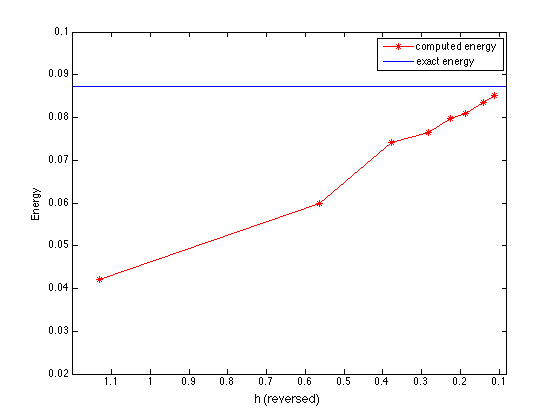}
\caption{(Example 1) the exact and the approximate energy versus $h$ with $\mu = 0.5$, $\gamma = 1$, $\RZ = 4$ and $\RAY=0.5$.  }
 \label{enrg_exConstSphere} 
\end{figure}
 \begin{figure}[!h]
\centering
\includegraphics[width= 12cm]{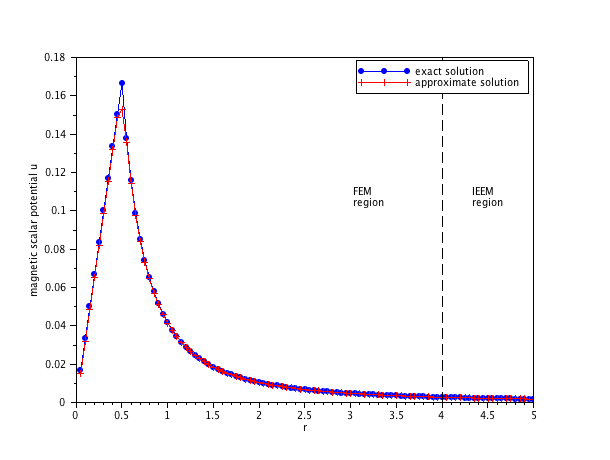}
\caption{(Example 1) the exact and the approximate  scalar potential versus $r=|\x|$ when $x=y=0$ and $z \geq 0$   ($\mu =0.5$, $\gamma = 1$, $\RZ = 4$, $r_0 = 0.5$ and $DoF =833750$). }
 \label{figm1}
\end{figure}

\subsection{Numerical example 2 (non homogeneously magnetized sphere)}
In this second example, we consider the case of a non homogeneous magnetization 
of a sphere   $\omm =\{\x \in \R^3\bve  |\x| < \RAY\}$. More precisely,   $\Mg$ is  the unit vector field 
\begin{equation}\label{unitMg_ball}
\Mg = (\cos \theta) \e_\varphi + (\sin \theta) \e_\theta \mbox{ in } \omm. 
\end{equation}
Here $(r, \varphi, \theta)$, $r \geq 0$, $0\leq \varphi \leq 2 \pi$, $0 \leq \theta \leq \pi$, denote the spherical coordinates and $(\e_r, \e_\varphi, \e_\theta)$ the corresponding unit vectors.   
In that case, the solution can be obtained explicitly (see the appendix):
\begin{equation}
u(\x) = \left\{
\begin{array}{ll}
\dps{ - \frac{2z}{9} +   \frac{2z}{3} \ln (\frac{|\x|}{ \RAY})}& \mbox{ if } |\x| \leq   \RAY, \;  \\
\dps{ -  \frac{2  \RAY^3 z}{9 |\x|^3} }  & \mbox{ if } |\x| \geq   \RAY.
\end{array}
\right.
\end{equation}
This solution belongs to $W^2_2(\R^3)$, as forecasted in Proposition \ref{regul_propo} (indeed,
 $\div \Mg \in L^2(\omm)$ and $\Mg.\n=0$ on $\pt \omm$).   In our numerical experiences,
  we fixe $r_0 = 0.5$ and $\RZ = 6$. The exact stray-field energy is given by
\begin{equation}
\En_{sf}(u) =  \frac{1}{2} \int_{\R^3} |\grad u|^2 dx   = \frac{16}{81} \pi \RAY^3= 0.0776. 
\end{equation}
According to Proposition  \ref{regul_propo}, no gradation is needed to get optimal convergence. 
More precisely, estimates \eqref{estima_optimal1} and
 \eqref{estima_optimal2} hold true for any gradation paramater $\mu \leq 1$ (here $k=1$ and $\gamma=1$), that is
 \begin{eqnarray}
\|u -u_{h}\|_{W^{1}_{0}(\Rt)^{3}}&\leq& C h \|u\|_{W^{2}_{2}(\Rt)^{3}},  \\ \label{estima_optimal1_ex2}
0 \leq  \EEST(u) - \EESTH(u_h)& \leq &C_2  h \|u\|_{W^{2}_{2}(\Rt)^{3}}. \label{estima_optimal2_ex2}
\end{eqnarray}
Table \ref{ex2_tab_errors} shows  these relative errors versus $h$.  The behavior of the discrete
stray-field energy $\EESTH(\Mg)$ is also displayed in Figure \ref{enrg_nonh_sphere}, while the approximate
and the exact solutions are displayed versus $ r = |\x|$ in Figure \ref{exa_appr_ex2} (when $x = y = z$).
We can observe that the energy converges as $h^{2.5}$. This superconvergence of  energy is not foreseen in estimate \eqref{er_energy}
 and has not been proved. We conjecture that this superconvergence of the energy holds when  $u \in W^2_2(\Rt)$ (or, equivalenty, when 
 $\Mg \in H(\div; \omm)$ and $\Mg.\n =0$ on $\pt \omm$).   
%
  \begin{table}[htp]
\begin{center}
\begin{tabular}{|c|c|c c c |ccc |}
  \hline
& & \multicolumn{3}{c|}{$\mu$} &  \multicolumn{3}{c|}{$\mu$}   \\ 
 \cline{3-8} 
   DoF & $h$&  1 & 0.7 & 0.5 & 1 & 0.75 & 0.5  \\  
 \cline{3-8}
    &  & \multicolumn{3}{c|}{$e_{0}(u)$ }&\multicolumn{3}{c|}{$e(\EEST)$}\\
  \hline
  875  & 1.131 & 0.258&0.250&0.247& 0.623 &0.619&0.615\\
   6750 & 0.565 &0.153&0.149&0.148&  0.325&0.322&0.320 \\
  22625  & 0.377 &0.101&0.099& 0.099& 0.151&0.149& 0.148\\
  53500 &0.282 &0.073&0.072&0.071& 0.084&0.083&0.082 \\
   104375  & 0.226 &0.061& 0.060&0.059&  0.054& 0.053&0.052 \\
   180250  &0.188 &  0.049&0.048&0.048&  0.033&0.032&0.032  \\
  427000&0.141 &0.037& 0.036& 0.036& 0.009& 0.008& 0.008 \\
  833750 &0.113 &0.029&0.029&0.029&   0.001&0.002&0.002\\
    \hline
     \multicolumn{2}{|c|}{The log. slope}&0.95 &0.94 &0.93 &2.79 &2.49 &2.49 \\
  \hline
\end{tabular} 
\end{center}
\caption{ (Example 2) The relative errors $e_{0}(u)$  and $e(\EEST)$  ($\gamma = 1$, $\RZ=6$ and $\RAY=0.5$).} \label{ex2_tab_errors}
\end{table}

\begin{figure}
\centering
\includegraphics[width= 12cm]{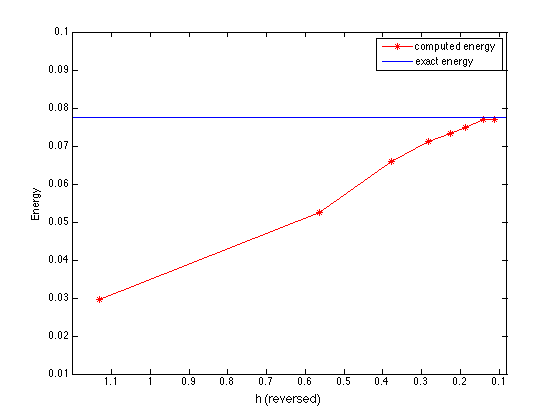}
\caption{(Example 2)  The exact and the approximate energy versus $h$ with $\mu = 0.5$, $\RAY = 0.5$, $\gamma = 1$ and $\RZ = 6$.  }
 \label{enrg_nonh_sphere}
\end{figure}
\begin{figure}
\centering
\includegraphics[width= 12cm]{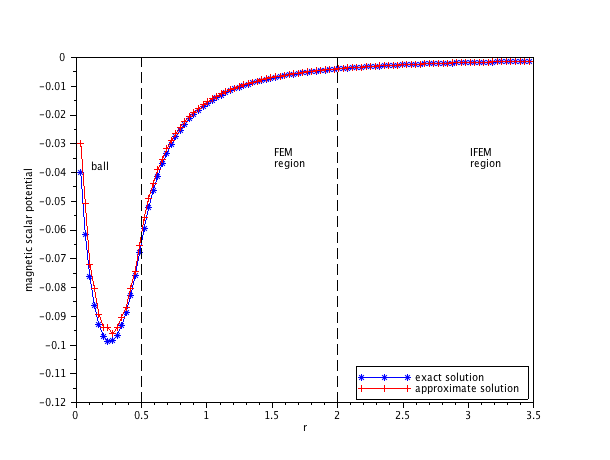}
\caption{(Example 2) The exact and approximate magnetic scalar potential versus $r = |\x|$ (with $x=y=z$). Here $\mu = 0.5, \gamma = 1,  \RZ = 6$ and $DoF =833750$. }
 \label{exa_appr_ex2}
\end{figure}

\subsection*{Numerical example 3}
As a last benchmark, we consider a homogeneously magnetized unit cube: $\Mg = (0, 0, 1)$
and $\omm = [-1/2, 1/2]^3$ (see, e. g., \cite{ClaasExl}). The stray-field energy in this case is given by
\begin{equation}
\EEST(u) = \frac{1}{6}. 
\end{equation}
 In Table \ref{tab_exemp3},
error on the energy versus $h$ is displayed for several values of
$\mu$. In figure \ref{ex3_energy}, we show  the evolution of the energy 
 versus $h$. 
Here also we may observe that the approximate energy converges quickly to
the exact one.  This  superconvergence  can clearly be seen  in 
Figure \ref{ex3_energy}. 
\begin{table}[htp]
\begin{center}
\begin{tabular}{|c|c|ccc|}
  \hline
& & \multicolumn{3}{c|}{The relative error of energy}   \\ 
 \cline{3-5} 
   DOF & $h$& \multicolumn{3}{c|}{$\mu$ }\\
 \cline{3-5}
    & &  1 & 0.7 & 0.5   \\ 
  \hline
  875  & 1.697 &0.468 &0.463&0.460\\
   6750 & 0.848 &0.364&0.360&0.358 \\
  22625  & 0.565 &0.298&0.295& 0.294\\
  53500 &0.424 &0.243&0.241&0.240 \\
   104375  & 0.339 &0.205& 0.203&0.203 \\
   180250  &0.282 & 0.134&0.133&0.133  \\
  427000&0.212 &0.013& 0.013& 0.012 \\
  \hline
\end{tabular} 
\end{center}
\caption{The relative error of energy  for $\gamma = 1$ (example 3)}
\label{tab_exemp3}
\end{table}

 \begin{figure}[!h]
\centering
\includegraphics[width= 10cm]{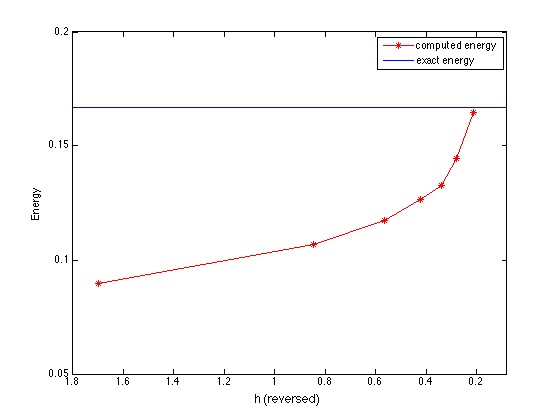}
\caption{ (Example 3) The computed energy of an homogeneously magnetized unit cube.}
 \label{ex3_energy}
\end{figure}
\appendix
\section{Solving the problem in the case of an inhomogeneously magnetized  ball (numerical example 2)}
The resolution of the system in the case of a ball $B_\RAY$ and $\Mg$ given by \eqref{unitMg_ball} can be done by means of a decomposition on  spherical harmonics  $(Y_\ell^m)_{\ell \geq 0, -\ell \leq m \leq \ell}$ (which is orthonormal with respect to the inner product in $L^2(\St)$).  We have outside the ball  $B_\RAY$:
$$
\Delta u = 0 \mbox{ in }\R^3 \backslash \overline{B}_\RAY. 
$$
Developping $u$ on the basis of spherical harmonics gives (see, e. g., \cite{booknedelec}):
$$
u(\x) = \sum_{\ell = 0}^{+\infty} \sum_{m=-\ell}^\ell A_\ell^m \left(\frac{r}{\RAY}\right)^{-\ell-1} Y_\ell^m( \varphi, \theta). 
$$
where  $(A_\ell^m)_{\ell \geq 0, -\ell \leq m \leq \ell}$ is a sequence of complex coefficients. On the other hand, we have in the interior of the ball
$$
\Delta u = \div \Mg = 2 \frac{\cos \theta}{r} = \frac{\alpha}{r}  Y_1^0(\varphi, \theta)  \mbox{ in } \overline{B}_\RAY, \; \; \mbox{ with } \alpha =  4 \sqrt{\frac{\pi}{3}}.  
$$
Writing  
$$
u (\x)  = \sum_{\ell = 0}^{+\infty} \sum_{m=-\ell}^\ell u_\ell^m(r)  Y_\ell^m ( \varphi, \theta), \mbox{ in } \overline{B}_\RAY, 
$$
gives: 
$$
\frac{1}{r^2} \frac{d}{dr } (r^2 \frac{d u_\ell^m }{dr }) (r) - \frac{\ell (\ell + 1) }{r^2} u_\ell^m(r)  =  \frac{\alpha }{r}  \delta_{\ell, 1}  \delta_{m, 0} \mbox{ for all } \ell \geq 0 \mbox{ and } -\ell \leq m \leq \ell, 
$$
where $\delta_{i, j}$, $i \in \N$, $j \in \N$,  denotes the usual  Kronecker delta. The solutions of this equations are of the form
$$
u_\ell^m (r)  = B_\ell^m  \left(\frac{r}{\RAY}\right)^{\ell} + C_\ell^m  \left(\frac{r}{\RAY}\right)^{-\ell-1} + \frac{\alpha}{3} r \ln( \frac{r}{\RAY}) \delta_{\ell, 1} \delta_{m, 0},
$$
where  $B_\ell^m$ and $C_\ell^m$ are constants.   Since
$u \in W^1_0(\R^3)$ we deduce that $u_{B_\RAY} \in H^1(B_\RAY)$. Necessarily 
$C_\ell^m = 0$ for all $\ell \geq 0$ and $|m| \leq \ell$. Since $[u]_{\pt \omm} = 0$, we deduce that
$ B_\ell^m = A_\ell^m$ for all $\ell \geq 0$ and $|m| \leq \ell$. In addition, 
$$
\left[ \frac{\pt u}{\pt r} \right]_{\pt \omm} =  -  \Mg . \n = -\Mg . \e_r = 0.
$$
Thus, for all $\ell \geq 0$ and $|m| \leq \ell $ we have
$$
\ell \frac{B^m_\ell}{\RAY} +  \frac{ \alpha}{3}  \delta_{\ell, 1} \delta_{m, 0}   = -\frac{ \ell + 1 }{\RAY} A^m_\ell. 
$$
Thus, $A_l^m = B_l^m=0$ for $(\ell, m) \ne (1, 0)$ and  
$$
A_1^0 = B_1^0 =  - \frac{\alpha \RAY}{9}. 
$$
Thus, if $|\x| \geq \RAY$ then
$$
u(\x)  = A_1^0    \left(\frac{\RAY}{r}\right)^{2} Y_1^0 =  - \frac{\RAY^3}{9 r^2} \alpha Y_1^0 = - \frac{2 \RAY^3 \cos \theta }{9 r^2} = - \frac{2 \RAY^3 z }{9 r^3}.  
$$
and 
$$
\|\grad u\|^2_{L^2(\R^3 \backslash \overline{B}_\RAY)} =    \frac{32 \pi}{243}\RAY^3. 
$$
If $|\x| \leq \RAY$ then
$$
u (\x)  = (B_1^0    \frac{r}{\RAY} +\frac{\alpha}{3} r \ln( \frac{r}{\RAY})) Y_1^0   = ( - \frac{r}{9} + \frac{r}{3} \ln( \frac{r}{\RAY})) \alpha Y_1^0=  \frac{2 r  \cos \theta }{9} ( - 1 + 3  \ln( \frac{r}{\RAY})). 
$$
Thus, 
$$
u  =  \frac{2 z }{9} ( - 1 + 3  \ln( \frac{r}{\RAY})), 
$$
and
$$
\|\grad u\|^2_{L^2( \overline{B}_\RAY)}  =\frac{64 \pi}{243}\RAY^3. 
$$
Thus, the energy of the corresponding stray-field is 
$$
\En_{sf} (u) =  \frac{1}{2} \int_{\RAY^3} |\grad u|^2 dx    = \frac{16}{81} \pi \RAY^3.
$$

$\;$\\
{\bf Aknowlegement}. \\
This work was partially supported by a public grant as part of the
Investissement d'avenir project, reference ANR-11-LABX-0056-LMH,
LabEx LMH.

$\;$\\
{\bf Declarations}. \\
{\it Conflict of interest}: The authors declare no competing interests.

\bibliographystyle{plain}
\bibliography{micromag.bib}

\end{document}